\numberwithin{equation}{section}
\newcommand{\upcite}[1]{\textsuperscript{\textsuperscript{\cite{#1}}}}
\newtheorem{theorem}{Theorem}[section]
\newtheorem{lemma}[theorem]{Lemma}
\theoremstyle{definition}
\theoremstyle{remark}
\newtheorem{remark}[theorem]{Remark}
\numberwithin{equation}{section}
\newenvironment{proof3.1}{\medskip\noindent{\bf Proof of Theorem \ref{thm3.1}:}\enspace}{\hfill \qed  \medskip}
\newenvironment{proof2.11}{\medskip\noindent{\bf Proof of Theorem \ref{thm2.10}:}\enspace}{\hfill \qed  \medskip}
\newenvironment{proof2.13}{\medskip\noindent{\bf Proof of Theorem \ref{thm2.11}:}\enspace}{\hfill \qed  \medskip}
\newenvironment{proof2.15}{\medskip\noindent{\bf Proof of Theorem \ref{thm2.15}:}\enspace}{\hfill \qed  \medskip}
\begin{document}
\title{\LARGE\bf {Energy decay rates of  solutions to a viscoelastic wave equation with variable exponents and weak damping}
\thanks{The project is supported by NSFC(11301211),
 by the Scientific and Technological Project of Jilin Province's Education Department in Thirteenth-five-year(JJKH20180111KJ).}}
\author[1]{Menglan Liao}
\author[2]{Bin Guo\thanks{Corresponding author
\newline \hspace*{4mm}{
Email addresses: liaoml14@mails.jlu.edu.cn(M. Liao), bguo@jlu.edu.cn(B. Guo), zxyzhu2009@163.com.cn(X. Zhu)}}}
\author[2,3]{Xiangyu Zhu}
\affil[1]{School of Mathematical Sciences, Xiamen University, Xiamen, Fujian, 361005, China}
\affil[2]{School of Mathematics, Jilin University, Changchun, Jilin Province 130012, China}
\affil[3]{Department of Mathematics, College of Science, Yanbian University,

Yanji, Jilin Province  133002, China}
\renewcommand*{\Affilfont}{\small\it}
\date{} \maketitle
\vspace{-20pt}
{\bf Abstract:}
The goal of the present paper is to study the asymptotic behavior of solutions for the viscoelastic wave equation with variable exponents
\[ u_{tt}-\Delta u+\int_0^tg(t-s)\Delta u(s)ds+a|u_t|^{m(x)-2}u_t=b|u|^{p(x)-2}u\]
under initial-boundary condition, where the exponents $p(x)$ and $m(x)$ are given functions, and $a,~b>0$ are constants. More precisely, under the condition $g'(t)\le -\xi(t)g(t)$,
here $\xi(t):\mathbb{R}^+\to\mathbb{R}^+$ is a non-increasing differential function with $\xi(0)>0,~\int_0^\infty\xi(s)ds=+\infty$, general decay results are derived. In addition, when $g$ decays polynomially, the exponential and  polynomial decay rates are obtained as well, respectively. This work generalizes and improves earlier results in the literature.

{\bf Keywords:} Viscoelasticity; Source; Damping; Energy decay; Variable exponents.

{\bf MSC(2010):} 35L20, 35B35, 35B40.

\maketitle

\section{Introduction}
In this paper, we are concerned with the energy decay estimates to the following viscoelastic wave equation with variable exponents:
\begin{equation}
\label{1.1}
\begin{cases}
      u_{tt}-\Delta u+\int_0^tg(t-s)\Delta u(s)ds+a|u_t|^{m(x)-2}u_t=b|u|^{p(x)-2}u& \text{in}~\Omega \times(0,T),  \\
      u(x,t)=0&\text{on}~\partial\Omega \times(0,T),  \\
      u(x,0)=u_0(x),~u_t(x,0)=u_1(x)& \text{for}~x\in\Omega,
\end{cases}
\end{equation}
where $T>0$, $\Omega\subset \mathbb{R}^n~(n\geq3)$ is a bounded Lipschitz domain with sufficiently smooth boundary $\partial\Omega$, and $a,~b>0$ are constants. The exponents $m(x)$ and $p(x)$ are given measurable functions on $\Omega$ satisfying
\begin{equation}\label{1.2}
    2\leq q_1\leq q(x)\leq q_2\leq \frac{2n}{n-2},
\end{equation}
with $q_1:=ess\inf_{x\in\Omega}q(x),~q_2:=ess\sup_{x\in\Omega}q(x)$, and the log-H\"{o}lder continuity condition, i.e. for $A>0$ and $0<\delta<1$:
\begin{equation}\label{1.3}
   |q(x)-q(y)|\leq-\frac{A}{\log|x-y|},\quad \text{for a.e. }x,y\in \Omega,~\text{with}~|x-y|<\delta.
\end{equation}
Let $g:\mathbb{R}^+\to\mathbb{R}^+$ be a non-increasing and differential function satisfying
\begin{equation}\label{2.1}
    g(0)>0,\quad 1-\int_0^\infty g(s)ds:=l>0.
\end{equation}

Viscoelasticity is the property of materials that exhibit both viscous and elastic characteristics when
undergoing deformation. The study of viscoelastic problems has attracted the attention of many authors and several decay results have been established. Cavalcanti and Oquendo \cite{CO2003} studied the equation
\begin{equation}\label{01}
    u_{tt}-\kappa_0\Delta u+\int_0^t\mathrm{div}[a(x)g(t-s)\nabla u(s)]ds+f(u)+b(x)h(u_t)=0\quad\text{in}~\Omega \times\mathbb{R}^+,
\end{equation}
where $a,~b$ are nonnegative functions, $a\in C^1(\overline{\Omega})$, $b\in L^\infty(\Omega)$ satisfying $a(x)+b(x)\geq\delta>0$ for any $ x\in\Omega,$ and $f$, $h$ are power-like functions.  They proved exponential stability for $g$ decaying exponentially and $h$ linear as well as polynomial stability for $g$ decaying polynomially and $h$ nonlinear under suitable conditions, which improved a result obtained by Cavalcanti et al. \cite{CCS2002}, where an exponential rate of decay was revealed by assuming that the kernel $g$ in the memory term decays exponentially for $a(x)=1,~\kappa_0=1$ in equation \eqref{01}. For $f(u)=0,~b(x)h(u_t)=0$ in equation \eqref{01}, many results on decay estimates of solutions have been presented, the interested readers can refer to \cite{CCM2008,M2008,M2018,RS1997,RS2001,T2013}, and the references therein.  It is worth pointing out that $g$ is not limited to the exponential decay or polynomial decay. For instance, the assumption on $g$ in \cite{M2008} is that there exists a differentiable function $\xi$ satisfying
\begin{equation*}
    \begin{split}
    g'(t)\leq -\xi(t)g(t)\quad \forall~t\geq 0,\\
    \Big|\frac{\xi'(t)}{\xi(t)}\Big|\leq k,~\xi(t)>0,~\xi'(t)\leq 0\quad \forall~t>0.
    \end{split}
\end{equation*}

Liu \cite{L2010} investigated the nonlinear viscoelastic equation
\begin{equation}\label{02}
    u_{tt}-\Delta u+\int_0^tg(t-s)\nabla u(s)ds+a(x)|u_t|^mu_t+|u|^\gamma u=0\quad\text{in}~\Omega \times\mathbb{R}^+,
\end{equation}
where $\gamma>0,~m\geq 0$, $a(x):\Omega\to \mathbb{R}^+$ is a function, which may vanish on
any part of $\Omega$ (including $\Omega$ itself), he established exponential or polynomial decay result if the relaxation function $g$ decays polynomially. Their results improved an earlier one given by Berrimi and Messaoudi \cite{BM2004}, where  an exponential decay result was presented when $g$ decays exponentially without imposing geometry restrictions on the boundary $\partial\Omega$. Recently, Belhannache, Algharabli and Messaoudi \cite{BAM2020} studied equation \eqref{02} in the absence of $|u|^\gamma u$ and $a(x)\equiv1$, they proved an explicit and general decay rate results by assuming that there exists a $C^1$ function $G: \mathbb{R}^+\to \mathbb{R}^+$ which is linear or it is strictly increasing and strictly convex $C^2$ function on $(0, r],~r\leq g(0)$, with $G(0)=G'(0)=0$, such that
\[g(t)\leq-\xi(t)G(g(t))\quad \forall~t\geq 0,\]
where $\xi(t)$ is a positive non-increasing differentiable function.

When there exist both the frictional damping and source term in equation, i.e.
\begin{equation}\label{03}
     u_{tt}-\Delta u+\int_0^tg(t-s)\Delta u(s)ds+a|u_t|^{m-2}u_t=b|u|^{p-2}u\quad\text{in}~\Omega \times\mathbb{R}^+,
\end{equation}
the global existence for $p\leq m$ and blow-up results for $p>m$ of solutions have been obtained in \cite{M2003,M2006,S2015}. For $m\equiv2$ in equation \eqref{03}, Wang et al. \cite{WW2008} investigated explicit exponential energy decay of the viscoelastic wave equation under the potential well. For $m>2$, however, there was hardly any work concerning the decay estimates of solutions. Until recently, Guo,  Rammaha and Sakuntasathien \cite{GRS2018} considered the asymptotic behavior of solutions for the history value problem
\begin{equation*}
\begin{cases}
    u_{tt}-k(0)\Delta u-\int_0^\infty k'(s)\Delta u(t-s)ds+a|u_t|^{m-1}u_t=|u|^{p-1}u\quad&\text{in}~\Omega \times(0,T),\\
    u(x,t)=0\quad&\text{in}~\Gamma \times(-\infty,T),\\
    u(x,t)=u_0(x,t)\quad&\text{in}~\Omega \times(-\infty,0].
\end{cases}
\end{equation*}
They presented global existence of solutions by introducing a suitable
notion of a potential well provided that the history value $u_0$ is taken from a subset of the potential well. Also, the uniform energy decay rate depending on the behavior of the damping term near the origin as well as the decreasing rate of the relaxation kernel $-k'(s)$. To be more precise, if the damping is linear near the origin and the relaxation kernel decays to zero exponentially, then the energy also decays to zero exponentially fast. Otherwise, the energy decays polynomially. In particular, Hadamard well-posedness of problem above has been discussed in \cite{GRS2014,GRS2017,SLW2018}.

In the absence of the viscoelastic term $\int_0^tg(t-s)\Delta u(s)ds$, there exist only several papers concerning the decay estimate of solutions for the hyperbolic equations with variable-exponent nonlinearities, among them were obtained by  Messaoudi,  Al-Smail and  Talahmeh \cite{MAT2018}, Ghegal,  Hamchi and Messaoudi \cite{GHM2018},  Messaoudi \cite{M2020} and Li, Guo and Liao \cite{LGL2020}, respectively.  Actually,  Park and Kang \cite{PK2019} have established the local existence and  the blow-up result of solutions for problem \eqref{1.1} when the initial energy lies in positive as well as nonpositive. However, there dose not exist any work related to the asymptotic behavior of solutions for problem \eqref{1.1}. In fact, there is no result concerning the asymptotic stability of the viscoelastic wave equation with variable exponent. In this paper, we follow the part idea in our previous paper \cite{LGL2020} to study the asymptotic behavior of solutions. 

This paper is organized as follows. In Section 2, we recall some useful lemmas and known results. Energy decay rates will be presented in Section 3.

\section{Preliminaries}
In this section, we present some useful results. Throughout this paper, we denote by $\|\cdot\|_q$ the $L^q(\Omega)$ with $1\leq q<\infty$. 
And we denote $u(x,t)$ by $u(t)$. In order to discuss problem (\ref{1.1}), we begin with  recalling Banach spaces of Orlicz-Sobolev type $L^{q(x)}(\Omega)$ \cite{FZ2001,FZ2003},
\[
L^{q(x)}(\Omega)=\Big\{f\Big| f~\mathrm{is~a~measurable~real-valued~function},\int_{\Omega}{|f|^{q(x)}}dx<\infty\Big\}.
\]
We introduce the norm on $L^{q(x)}(\Omega)$ by
\[\|f\|_{q(x)}=\mathrm{inf}\Big\{\lambda>0\Big|\int_{\Omega}{\Big|\frac{f}{\lambda}\Big|^{q(x)}}dx\leq1\Big\}.\]
It follows directly that
\begin{equation}\label{add2.1}
    \min\left\{\|f\|_{q(x)}^{q_1},\|f\|_{q(x)}^{q_2}\right\}\leq \int_{\Omega}{|f|^{q(x)}}dx\leq\max\left\{\|f\|_{q(x)}^{q_1},\|f\|_{q(x)}^{q_2}\right\}.
\end{equation}
\begin{lemma}\label{lem2.1}
\upcite{FZ2001,FZ2003} If $q_1(x),~q_2(x)\in C_+(\overline\Omega)=\{h\in C(\overline\Omega):~\min\limits_{x\in \overline\Omega}h(x)>1\},$ $q_1(x)\leq q_2(x)$ for any $x\in \Omega$, then there exists the continuous embedding $L^{q_2(x)}(\Omega)\hookrightarrow L^{q_1(x)}(\Omega),$ whose norm does not exceed $|\Omega|+1$.
\end{lemma}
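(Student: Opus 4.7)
The plan is to prove the continuous embedding by obtaining the explicit bound $\|f\|_{q_1(x)} \leq (|\Omega|+1)\|f\|_{q_2(x)}$ for every $f \in L^{q_2(x)}(\Omega)$, which yields the stated operator-norm estimate. By homogeneity of the Luxemburg norm, I would first reduce to the case $\|f\|_{q_2(x)} = 1$; then the modular inequality \eqref{add2.1} gives $\int_\Omega |f|^{q_2(x)}\,dx \leq 1$, and the task becomes showing that $\int_\Omega |f(x)/\lambda|^{q_1(x)}\,dx \leq 1$ for the single choice $\lambda = |\Omega|+1$, since by definition this will force $\|f\|_{q_1(x)} \leq \lambda$.

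To verify the modular inequality with this $\lambda$, I would split $\Omega$ pointwise according to the size of $f$: set $\Omega_1 = \{x \in \Omega : |f(x)| \leq 1\}$ and $\Omega_2 = \{x \in \Omega : |f(x)| > 1\}$. On $\Omega_1$, using $q_1(x) > 1$ (from $q_1 \in C_+(\overline\Omega)$) and $\lambda > 1$, the estimate $|f/\lambda|^{q_1(x)} \leq 1/\lambda^{q_1(x)} \leq 1/\lambda$ holds. On $\Omega_2$, the hypothesis $q_1(x) \leq q_2(x)$ together with $|f(x)| > 1$ gives $|f|^{q_1(x)} \leq |f|^{q_2(x)}$, and combined with $\lambda^{q_1(x)} \geq \lambda$ this yields $|f/\lambda|^{q_1(x)} \leq |f|^{q_2(x)}/\lambda$.

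Integrating and combining the two pointwise bounds produces
\[
\int_\Omega \left|\frac{f}{\lambda}\right|^{q_1(x)} dx \leq \frac{|\Omega_1|}{\lambda} + \frac{1}{\lambda}\int_{\Omega_2} |f|^{q_2(x)}\,dx \leq \frac{|\Omega|}{\lambda} + \frac{1}{\lambda} = \frac{|\Omega|+1}{\lambda} = 1,
\]
so by the definition of the norm $\|f\|_{q_1(x)} \leq \lambda = |\Omega|+1$. Undoing the normalization via the scaling $f \mapsto f/\|f\|_{q_2(x)}$ (when $\|f\|_{q_2(x)} > 0$; the zero case is trivial) then delivers the embedding constant $|\Omega|+1$.

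The main subtlety is calibrating the scaling parameter so that the embedding constant is exactly $|\Omega|+1$ rather than a larger expression depending on $q_1$ and $q_2$. A naive Hölder-type argument for variable exponents would introduce extra factors, so the case split on $\{|f|\leq 1\}$ versus $\{|f|>1\}$ is essential: it separates the regime where raising to a smaller exponent makes things larger from the regime where a larger exponent dominates, and the hypothesis $q_1(x) > 1$ is what allows $\lambda$ to be extracted only to the first power in both pieces of the estimate.
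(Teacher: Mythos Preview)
The paper does not supply its own proof of Lemma~\ref{lem2.1}; it simply cites the result from \cite{FZ2001,FZ2003}. Your argument is correct and complete, and it is essentially the standard proof found in those references: normalize to $\|f\|_{q_2(x)}=1$, split according to $\{|f|\le 1\}$ and $\{|f|>1\}$, and verify directly that the $q_1(\cdot)$-modular of $f/(|\Omega|+1)$ does not exceed $1$.
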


Now, we directly give the local existence and blow-up results of solutions for problem \eqref{1.1}, which was proved in a recent paper published by Park and Kang \cite{PK2019}.
\begin{theorem}
\upcite{PK2019}
Suppose that \eqref{2.1} hold, $m(x),~p(x)$ satisfy \eqref{1.2} \eqref{1.3} and
\[2<p_1\leq p(x)\leq p_2<\frac{2(n-1)}{n-2}.\]
Then$,$ for every $(u_0(x),u_1(x))\in H_0^1(\Omega)\times L^2(\Omega),$ problem \eqref{1.1} has a unique local solution for some $T>0,$
\[u\in C([0,T];H_0^1(\Omega)),\quad u_t\in C([0,T];L^2(\Omega))\cap L^{m(x)}(\Omega\times(0,T)). \]
\end{theorem}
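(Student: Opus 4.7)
The plan is to combine a Faedo-Galerkin approximation for the linear-in-$u_{tt}$ wave operator with monotone operator techniques to handle the variable-exponent damping, plus a contraction argument (or compactness) for the source term. The viscoelastic memory integral is linear in $u$ and causes no conceptual difficulty beyond bookkeeping.

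\textbf{Step 1: Galerkin setup.} Let $\{w_j\}_{j\geq 1}$ be the orthonormal basis of $L^2(\Omega)$ consisting of eigenfunctions of $-\Delta$ with Dirichlet boundary condition; these are also an orthogonal basis of $H_0^1(\Omega)$. Set $V_k = \operatorname{span}\{w_1,\ldots,w_k\}$ and look for $u^k(t) = \sum_{j=1}^k c_j^k(t) w_j$ solving the projected equation
\begin{equation*}
(u^k_{tt},w_j) + (\nabla u^k,\nabla w_j) - \int_0^t g(t-s)(\nabla u^k(s),\nabla w_j)\,ds + a(|u^k_t|^{m(x)-2}u^k_t,w_j) = b(|u^k|^{p(x)-2}u^k,w_j)
\end{equation*}
for $j=1,\ldots,k$, with initial data $(u^k(0),u^k_t(0))$ being the $V_k$-projection of $(u_0,u_1)$. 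Under \eqref{1.2} and the upper bound $p_2<\tfrac{2(n-1)}{n-2}$, the Sobolev embedding $H^1_0\hookrightarrow L^{p(x)}$ guarantees that the right-hand side is a locally Lipschitz function of the coefficients $c_j^k$; the damping term is continuous and monotone. Standard Carathéodory/ODE theory then gives a local solution $u^k$ on some $[0,T_k]$.

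\textbf{Step 2: A priori estimates.} Multiply the $j$-th projected equation by $(c_j^k)'$ and sum over $j$, which amounts to testing with $u_t^k$. Using the standard viscoelastic identity
\begin{equation*}
\int_0^t g(t-s)(\nabla u^k(s),\nabla u^k_t(t))\,ds = -\tfrac{1}{2}\tfrac{d}{dt}\!\left[(g\Box\nabla u^k)(t) - \Big(\!\int_0^t g(s)\,ds\Big)\|\nabla u^k\|_2^2\right] + \tfrac{1}{2}(g'\Box\nabla u^k)(t) - \tfrac{1}{2}g(t)\|\nabla u^k\|_2^2,
\end{equation*}
together with $g'\leq 0$ and \eqref{2.1}, produces a differential inequality for
\begin{equation*}
E^k(t) = \tfrac{1}{2}\|u^k_t\|_2^2 + \tfrac{l}{2}\|\nabla u^k\|_2^2 + \tfrac{1}{2}(g\Box\nabla u^k)(t) + a\!\int_0^t\!\!\int_\Omega |u^k_t|^{m(x)}\,dx\,ds,
\end{equation*}
with the source term controlled by $\int_\Omega |u^k|^{p(x)}\,dx \leq \max\{\|u^k\|_{p(x)}^{p_1},\|u^k\|_{p(x)}^{p_2}\}$ via \eqref{add2.1} and the Sobolev embedding from $H^1_0$ into the variable-exponent Lebesgue space (Lemma \ref{lem2.1} and the log-Hölder condition \eqref{1.3}). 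A Gronwall-type argument then yields a uniform bound on $[0,T]$ for some $T>0$ independent of $k$, extending $u^k$ to $[0,T]$ and bounding it in $L^\infty(0,T;H^1_0)$, $u^k_t$ in $L^\infty(0,T;L^2)\cap L^{m(\cdot)}(\Omega\times(0,T))$, and $u^k_{tt}$ in a suitable dual space.

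\textbf{Step 3: Passage to the limit.} Extract weak-$*$ limits $u^k\rightharpoonup u$ in $L^\infty(0,T;H^1_0)$ and $u^k_t\rightharpoonup u_t$ in $L^\infty(0,T;L^2)$; by Aubin-Lions, $u^k\to u$ strongly in $C([0,T];L^2)$ and a.e.\ on $\Omega\times(0,T)$ along a subsequence. Strong convergence plus the subcritical growth $p_2<2^*$ give $|u^k|^{p(x)-2}u^k\to |u|^{p(x)-2}u$ in $L^{p'(x)}(\Omega\times(0,T))$ by a Vitali-type argument using uniform integrability from the bound on $\int_\Omega |u^k|^{p(x)}$. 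The memory term passes trivially since it is linear in $u$ and $g\in L^1$. The main obstacle is the damping term: $|u^k_t|^{m(x)-2}u^k_t$ only converges weakly. Here I would invoke the Minty-Browder trick using monotonicity of $\xi\mapsto|\xi|^{m(x)-2}\xi$, combined with a weak lower semicontinuity identification of the limit through the energy inequality at time $T$.

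\textbf{Step 4: Initial data and uniqueness.} Continuity in time follows from the usual regularization $u\in C_w([0,T];H^1_0)$, upgraded to strong continuity via the energy identity. For uniqueness, subtract two solutions $u,v$, test with $(u-v)_t$, and exploit monotonicity of the damping, $(|u_t|^{m(x)-2}u_t-|v_t|^{m(x)-2}v_t)(u_t-v_t)\geq 0$, and Lipschitz-type bounds for $|s|^{p(x)-2}s$ on the bounded sets given by the energy estimate; a Gronwall argument closes the proof. The hardest single step is the identification of the weak limit of $|u^k_t|^{m(x)-2}u^k_t$, since the variable exponent forbids direct use of classical duality in a single $L^p$ space and forces one to work in the Orlicz--Sobolev framework of Lemma \ref{lem2.1}.
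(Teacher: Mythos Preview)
The paper does not prove this theorem at all: it is quoted verbatim from Park and Kang \cite{PK2019} and simply cited as a known result (``which was proved in a recent paper published by Park and Kang''). So there is no proof in the paper to compare against; you have supplied a proof sketch where the authors intentionally gave none.

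That said, your outline is the standard Faedo--Galerkin/monotonicity route one expects in \cite{PK2019}, and the main steps are sound. A few points to tighten if you actually carry this out. First, the role of the hypothesis $p_2<\tfrac{2(n-1)}{n-2}$ is more specific than ``subcritical'': it is exactly what makes $u\mapsto |u|^{p(x)-2}u$ locally Lipschitz from $H_0^1(\Omega)$ into $L^2(\Omega)$ (since $2(p_2-1)<2^*$), which is what drives both the contraction/uniqueness step and the passage to the limit in the source; merely $p_2\le 2^*$ would not suffice for uniqueness. Second, in Step~3 the Minty--Browder identification of the damping limit in the variable-exponent setting requires an energy \emph{equality} (or at least the right inequality direction) at the Galerkin level together with weak lower semicontinuity, and one must work in the modular rather than the norm; your remark that this is ``the hardest single step'' is accurate, but the argument goes through because $\xi\mapsto|\xi|^{m(x)-2}\xi$ is the subdifferential of a convex modular on $L^{m(\cdot)}$. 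Third, the upgrade from $C_w$ to $C$ in time for $u$ in $H_0^1$ is not automatic here; one typically gets $u\in C([0,T];H_0^1)$ only after establishing the energy identity for the limit, which in turn uses the regularity $u_{tt}\in L^{m'(\cdot)}+H^{-1}$ and a density argument---worth flagging explicitly.
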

Define the total energy associated with problem \eqref{1.1} by
\begin{equation}\label{2.2}
\begin{split}
    E(t)&=\frac12\|u_t(t)\|_2^2+\frac12\Big(1-\int_0^t g(s)ds\Big)\|\nabla u(t)\|_2^2\\
    &\quad+\frac12\int_0^tg(t-s)\|\nabla u(t)-\nabla u(s)\|_2^2ds-b\int_\Omega\frac{|u(t)|^{p(x)}}{p(x)}dx.
\end{split}
\end{equation}
Then, for $t\geq 0$
\begin{equation}\label{2.3}
\begin{split}
    E'(t)=-a\int_\Omega |u_t(t)|^{m(x)}dx-\frac12g(t)\|\nabla u(t)\|_2^2+\frac12\int_0^tg_t(t-s)\|\nabla u(t)-\nabla u(s)\|_2^2ds\leq 0.
\end{split}
\end{equation}
Throughout this paper, we denote by $T_{max}$ the maximal existence time, and set
\[B_1=\max\Big\{1,\frac{B}{l^{\frac12}},\frac{1}{b^{\frac12}}\Big\},
\quad\lambda_1=\Big(\frac{1}{bB_1^{p_1}}\Big)^{\frac{1}{p_1-2}},\quad E_1=\Big(\frac12-\frac{1}{p_1}\Big)\lambda_1^2,\quad \lambda(0)=l^{\frac12}\|\nabla u_0\|_2\]
with $B$ being the embedding constant of $H_0^1(\Omega)\hookrightarrow L^{q(x)}(\Omega)$, i.e.
\begin{equation}\label{add2}
    \|u\|_{q(x)}\leq B\|\nabla u\|_2.
\end{equation}
\begin{theorem}
\upcite{PK2019} Suppose that $m(x),~p(x)$ satisfy \eqref{1.2} \eqref{1.3} and $m_2<p_1$. Assume that $g$ satisfies \eqref{2.1} and
\[\int_0^\infty g(s)ds<\frac{p_1/2-1}{p_1/2-1+1/(2p_1)}.\]
Then the solution of problem \eqref{1.1} blows up in finite time if
\[E(0)<\Big(1-\frac{1-l}{p_1(p_1-2)l}\Big)E_1,\quad \lambda_1<\lambda(0).\]
\end{theorem}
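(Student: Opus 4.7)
The plan is to follow a Vitillaro-type blow-up argument, adapted to the variable-exponent and viscoelastic setting. First I would introduce the shifted energy $H(t) := E_1 - E(t)$ and show that, under the hypotheses $E(0) < \big(1 - \tfrac{1-l}{p_1(p_1-2)l}\big) E_1$ and $\lambda_1 < \lambda(0)$, the quantity $\lambda(t)$ stays strictly above $\lambda_1$ for all $t$ in the existence interval. This is a standard mountain-pass/invariance argument: combining the energy identity (\ref{2.2})--(\ref{2.3}) with the Sobolev--Orlicz embedding (\ref{add2}) and the bound (\ref{add2.1}) gives
\[
E(t) \geq \tfrac{1}{2}\lambda(t)^{2} - \tfrac{b}{p_1}B_1^{p_1}\lambda(t)^{p_1},
\]
whose right-hand side, viewed as a function of $\lambda(t)$, has maximum $E_1$ at $\lambda_1$. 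A continuity argument then forces $\lambda(t) > \lambda_1$ for all $t$, and in particular $H(t) \geq H(0) > 0$ with a uniform lower bound on $\int_\Omega |u|^{p(x)}/p(x)\,dx$.

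Next I would set up the Lyapunov functional
\[
L(t) := H^{1-\sigma}(t) + \varepsilon \int_\Omega u\, u_t \, dx,
\]
for parameters $0<\sigma<\tfrac{p_1-2}{2p_1}$ and $\varepsilon>0$ to be chosen small. Differentiating, using the equation tested against $u$, and invoking (\ref{2.3}), one obtains after rearrangement
\[
L'(t) \geq (1-\sigma) H^{-\sigma}(t) H'(t) + \varepsilon\Big(\|u_t\|_2^2 - \|\nabla u\|_2^2 + \int_0^t g(t-s)\int_\Omega \nabla u(t)\cdot \nabla u(s)\,dx\,ds + b\!\int_\Omega |u|^{p(x)}dx - a\!\int_\Omega |u_t|^{m(x)-2} u_t u\,dx\Big).
\]
The cross term from the memory is split via Young's inequality together with the assumed smallness of $\int_0^\infty g(s)\,ds$, which is precisely calibrated so that the effective coefficient in front of $\|\nabla u\|_2^2$ remains positive; the source term is then used to absorb everything. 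The damping term is controlled through variable-exponent Young's inequality and the hypothesis $m_2 < p_1$, producing a term that is absorbed into $(1-\sigma)H^{-\sigma}H'$ after raising $H$ to appropriate powers (this is where the restriction on $\sigma$ is used).

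The outcome is a differential inequality of the form
\[
L'(t) \geq c\Big(H(t) + \|u_t\|_2^2 + \int_\Omega |u|^{p(x)}dx + (g\circ \nabla u)(t)\Big) \geq C\, L(t)^{1/(1-\sigma)},
\]
where the last step uses Hölder's inequality, (\ref{add2.1}) and Lemma \ref{lem2.1} to bound $\big(\int u u_t\,dx\big)^{1/(1-\sigma)}$ by a sum of the three positive quantities on the left. Since $1/(1-\sigma) > 1$, integration of this ODE inequality shows that $L(t)$ blows up in finite time, and hence so does the solution.

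The main obstacle I expect lies in the simultaneous presence of three hard ingredients: the memory convolution, the nonlinear damping $a|u_t|^{m(x)-2}u_t$, and the variable-exponent source. The specific threshold $\big(1-\tfrac{1-l}{p_1(p_1-2)l}\big)E_1$ and the quantitative bound on $\int_0^\infty g(s)\,ds$ are not cosmetic: they are dictated by the need to absorb the viscoelastic cross-term after applying Young's inequality, while still leaving a positive coefficient in front of $b\int_\Omega |u|^{p(x)}dx$. The variable-exponent damping estimate must be carried out pointwise in $x$ and then integrated; the bound $m(x) \leq m_2 < p_1 \leq p(x)$ is exactly what permits the damping contribution to be dominated by $H^{-\sigma}\!\int_\Omega |u_t|^{m(x)}dx$ after interpolation, closing the argument.
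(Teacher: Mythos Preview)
The paper does not prove this theorem; it is quoted from Park and Kang \cite{PK2019} as a background result (see the sentence introducing the local existence and blow-up theorems in Section~2), so there is no proof in the present paper against which to compare your proposal.

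That said, your outline is the standard Vitillaro--Messaoudi strategy and is almost certainly what \cite{PK2019} does: the invariance of the region $\{\lambda(t)>\lambda_1\}$ under the flow, the Lyapunov functional $L(t)=H^{1-\sigma}(t)+\varepsilon\int_\Omega u\,u_t\,dx$, and the superlinear differential inequality $L'\geq C L^{1/(1-\sigma)}$ are the canonical ingredients for this class of problems. The roles you assign to the quantitative hypotheses are correct: the smallness of $\int_0^\infty g(s)\,ds$ and the threshold $\big(1-\tfrac{1-l}{p_1(p_1-2)l}\big)E_1$ both enter when the memory cross-term $\int_0^t g(t-s)\int_\Omega\nabla u(t)\cdot\nabla u(s)\,dx\,ds$ is split via Young's inequality and the resulting $\|\nabla u\|_2^2$ contribution must be absorbed while retaining a positive coefficient on the source. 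One small caution: in the invariance step you write $E(t)\geq \tfrac12\lambda^2-\tfrac{b}{p_1}B_1^{p_1}\lambda^{p_1}$, but because $p(x)$ is variable the estimate from \eqref{add2.1} and \eqref{add2} actually produces $\max\{\lambda^{p_1},\lambda^{p_2}\}$ (cf.\ \eqref{3.2}); in the blow-up regime $\lambda>\lambda_1$ with $\lambda_1<1$ this still reduces to the $\lambda^{p_1}$ branch, so the conclusion is unaffected, but the argument needs that case distinction.
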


\section{Energy decay rates}
In  this section, the energy decay rate of global solutions for problem \eqref{1.1} is discussed. Our result is as follows:
\begin{theorem}\label{thm3.1}
Suppose that $m(x),~p(x)$ satisfy \eqref{1.2} \eqref{1.3}. If the relaxation function $g$ satisfies \eqref{2.1} and the following conditions are fulfilled
\begin{equation}\label{3.1}
    0<E(0)<\Big(\frac{p_1}{p_2}\Big)^{\frac{1}{p_1-2}}\lambda_1^2\Big(\frac12-\frac{1}{p_2}\Big),\quad \lambda_1>\lambda(0),
\end{equation}
 then there admits the following exponential or polynomial decay rate$:$
\begin{enumerate}[$(1)$]
  \item if  $g'(t)\leq -\xi(t)g(t)$ and $\xi(t):\mathbb{R}^+\to\mathbb{R}^+$ is a non-increasing differential function with $\xi(0)>0,~\int_0^\infty \xi(s)ds=+\infty$, then
   \begin{equation}\label{00add0}
    E(t)\leq E(0)\left(\frac{m_2}{2+K(m_2-2)\int_0^t\xi(s)ds}\right)^{\frac{2}{m_2-2}}\quad\text{for~}m_2>2,
  \end{equation}
  \begin{equation}\label{add0}
    E(t)\leq E(0)e^{1-K\int_0^t\xi(s)ds} \quad\text{for~}m(x)\equiv2,
  \end{equation}
  where $K>0$ is obtained later.
  \item if $g'(t)+Cg^\alpha(t)\leq 0$ with $C$ being some positive constant,  then
   \begin{equation}\label{00add1}
     E(t)\leq E(0)\left(\frac{m_2}{2+K(\alpha,\sigma)(m_2-2)t}\right)^{\frac{2}{m_2-2}}\quad \text{for~}m_2>2,
  \end{equation}
  \begin{equation}\label{add1}
    E(t)\leq E(0)e^{1-K(\alpha,\sigma)t}\quad \text{for~}m(x)\equiv2,
  \end{equation}
   where $K(\alpha,\sigma)>0$  is obtained later$,$  $1<\alpha<2,~0<\sigma<1$ and $2\alpha+\sigma<3.$
\end{enumerate}
And the total energy $E(t)$ decays to zero$,$ i.e.
\[\lim_{t\to \infty}E(t)=0.\]
\end{theorem}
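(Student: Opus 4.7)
The plan is to use a perturbed Lyapunov functional combined with the standard multiplier technique, with extra care to handle the variable exponent $m(x)$ in the damping term. First I would establish an \emph{invariant set} argument: under (3.1), by continuity of $t\mapsto \lambda(t):= l^{1/2}\|\nabla u(t)\|_2$, the non-increase of $E(t)$ from (2.3), and the variable-exponent Sobolev estimate $b\int_\Omega |u|^{p(x)}/p(x)\,dx \le C(B,l,p_1,p_2)\,\lambda(t)^{p_1}$ (applied via \eqref{add2.1} and \eqref{add2}), I would show that $\lambda(t)<\lambda_1$ for all $t\in[0,T_{\max})$. This yields the coercivity
\[
E(t) \;\gtrsim\; \|u_t(t)\|_2^2 + \|\nabla u(t)\|_2^2 + (g\circ \nabla u)(t),
\]
where $(g\circ \nabla u)(t) := \int_0^t g(t-s)\|\nabla u(t)-\nabla u(s)\|_2^2\,ds$, and in particular the source term is uniformly dominated by the elastic energy.

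Next I would introduce the perturbed functional
\[
L(t) \;=\; N E(t) \;+\; \epsilon_1 \psi(t) \;+\; \epsilon_2 \chi(t),\qquad
\psi(t) := \int_\Omega u\, u_t\,dx,\quad \chi(t) := -\int_\Omega u_t \int_0^t g(t-s)\bigl(u(t)-u(s)\bigr)\,ds\,dx,
\]
and with $N$ large enough one gets $L\sim E$. Computing $\psi'$ and $\chi'$ with the PDE, integrating by parts and applying Young/Hölder/Poincaré, one obtains an inequality of the form
\[
L'(t) \;\le\; -c_1 E(t) \;+\; c_2 (g\circ \nabla u)(t) \;+\; c_3 \int_\Omega |u_t|^{m(x)}\,dx.
\]
The main obstacle is controlling the last, variable-exponent, term. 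I would split $\Omega=\Omega_+\cup\Omega_-$ by $\{|u_t|\ge 1\}$ and $\{|u_t|<1\}$; on $\Omega_+$ the integrand is dominated by $|u_t|^{m_2}$ while on $\Omega_-$ by $|u_t|^{m_1}\le |u_t|^2$. Using Hölder with $q(x)$-norms on $\Omega_-$ to trade $\|u_t\|_2^2$ against $\int_\Omega|u_t|^{m(x)}dx$ (which equals $-\tfrac{1}{a}$ times part of $E'(t)$ via \eqref{2.3}) lets the whole damping contribution be absorbed as a multiple of $-E'(t)$, at the cost of an extra factor $E(t)^{(m_2-2)/2}$ when $m_2>2$.

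For Part (1), multiplying the resulting differential inequality by $\xi(t)$ and exploiting $g'\le -\xi g$ gives $\xi(t)(g\circ \nabla u)(t)\le -2E'(t)$ (cf.\ \eqref{2.3}), so after absorption one reaches
\[
\bigl(\xi L\bigr)'(t) \;\le\; -K\,\xi(t)\, E(t)\,\bigl(E(t)\bigr)^{(m_2-2)/2}\quad \text{or}\quad \bigl(\xi L\bigr)'(t)\le -K\xi(t)E(t)\ \text{if } m\equiv 2,
\]
which, by a standard Gronwall/ODE comparison (using $L\sim E$ and the monotonicity of $\xi$), gives \eqref{add0} and \eqref{00add0} respectively. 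For Part (2), with $g'+Cg^\alpha\le 0$, I would follow the Martinez–Messaoudi interpolation trick: the hypothesis $2\alpha+\sigma<3$ guarantees $\int_0^\infty g^{1-\sigma}(s)ds<\infty$, and Hölder's inequality in $s$ with exponents tied to $\alpha,\sigma$ produces a bound $(g\circ\nabla u)(t)\le C(\alpha,\sigma)\bigl(-E'(t)\bigr)^{1/\alpha}$ (up to a bounded factor in $\|\nabla u(t)-\nabla u(s)\|_2$). Inserting this in the differential inequality for $L$ yields $L'\le -K(\alpha,\sigma)L^\theta$ whose explicit solution produces \eqref{add1} and \eqref{00add1}. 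The final statement $\lim_{t\to\infty}E(t)=0$ follows directly from the explicit decay rates and the divergence of $\int_0^\infty \xi(s)ds$.
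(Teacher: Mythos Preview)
Your proposal takes a genuinely different route from the paper. The paper does \emph{not} build a perturbed Lyapunov functional $L=NE+\epsilon_1\psi+\epsilon_2\chi$; instead it uses Martinez's integral method: it multiplies the equation by $\xi(t)E^\gamma(t)u(t)$, integrates over $\Omega\times(s,T)$, estimates each of the eight resulting terms so as to obtain $\int_s^\infty \xi(t)E^{\gamma+1}(t)\,dt \le K^{-1}E^\gamma(0)E(s)$, and then invokes Lemma~\ref{lem3.0}. Your invariant-set argument and the handling of the viscoelastic terms match the paper's Lemmas~\ref{lem3.1}--\ref{lem3.2} and its Steps~(5)--(6); the divergence is in how the nonlinear damping is processed.

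There is a real gap for $m_2>2$. Your claim that the $\|u_t\|_2^2$ term coming from $\psi'$ can be ``absorbed as a multiple of $-E'(t)$, at the cost of an extra factor $E(t)^{(m_2-2)/2}$'' does not follow from the manipulations you describe. The splitting $\Omega_+\cup\Omega_-$ together with the embedding $L^{m(x)}\hookrightarrow L^2$ yields at best $\|u_t\|_2^2\lesssim(-E'(t))+(-E'(t))^{2/m_2}$; no factor of $E(t)$ appears, so you do \emph{not} arrive at a clean inequality $(\xi L)'\le -K\xi E^{m_2/2}$. The paper circumvents this precisely by carrying the weight $E^\gamma(t)$ \emph{from the outset}: the product $E^\gamma(t)\|u_t\|_2^2$ is split via Young's inequality into $\epsilon\,E^{\gamma m_2/(m_2-2)}(t)+C_\epsilon\int_\Omega|u_t|^{m(x)}dx$, and the choice $\gamma=(m_2-2)/2$ makes the first piece exactly $\epsilon E^{\gamma+1}$, absorbable on the left (see the paper's Step~(4)). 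Within the Lyapunov framework you would have to either multiply your differential inequality by $E^\gamma$ \emph{a posteriori} and integrate (which reproduces the paper's argument), or run a Nakao-type iteration on $L'\le -cE+C(-E')^{2/m_2}$; neither is the one-line step your sketch suggests.

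Two secondary points. In Part~(2) your stated bound $(g\circ\nabla u)(t)\le C(\alpha,\sigma)(-E'(t))^{1/\alpha}$ has the wrong exponent: the H\"older splitting used in the paper's \eqref{add3.4} gives a factor $(-E')^{\sigma/(\sigma+\alpha-1)}$, and it is the hypothesis $2\alpha+\sigma<3$ that makes the companion factor $\int_0^t g^{1-\sigma}ds$ bounded. Also, the paper uses the sharper hypothesis \eqref{3.1} (strictly stronger than $E(0)<E_1$) to force $\lambda_2<(p_1/p_2)^{1/(p_1-2)}\lambda_1$ and hence $\omega:=(1-2/p_2)\tilde{C}p_2<2$; this is exactly what allows the source contribution in \eqref{3.17} to be absorbed. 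Your sketch should make this quantitative role of \eqref{3.1} explicit when closing the estimates.
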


\begin{remark}
If $p(x)$ is constant, i.e. $p(x)\equiv p$, obviously, for $0<E(0)<E_1$, we have the corresponding  exponential or polynomial decay rate.  If $p(x)$ is variable, however, we do not discuss if there exists the energy decay  result of global solutions  for $\Big(\frac{p_1}{p_2}\Big)^{\frac{1}{p_1-2}}\lambda_1^2\Big(\frac12-\frac{1}{p_2}\Big)\leq E(0)$.
\end{remark}

 In order to prove our theorem, let us show some auxiliary lemmas.
 \begin{lemma}\label{lem3.0}
\upcite{M1999}
Let $E: \mathbb{R}^+\to\mathbb{R}^+$ be a non-increasing function and $\phi : \mathbb{R}^+\to\mathbb{R}^+$ be a strictly increasing function of class $C^1$ such that
\[\phi(0)=0\text{ and }\phi(t)\to+\infty\text{ as }t\to+\infty.\]
Assume that there exist $\sigma \geq  0$ and $\omega > 0$ such that$:$
 \[\int_t^{+\infty}(E(s))^{1+\sigma}\phi^{\prime}(s)ds\leq  \frac{1}{\omega}(E(0))^\sigma E(t),\]
 then $E$ has the following decay property$:$
\begin{enumerate}[$(1)$]
  \item if $\sigma=0,$ then $E(t)\leq  E(0)e^{1-\omega\phi(t)}$ for all $t\geq  0;$
  \item if $\sigma>0,$ then $E(t)\leq  E(0)\left(\frac{1+\sigma}{1+\omega\sigma\phi(t)}\right)^{\frac{1}{\sigma}}$ for all $t\geq  0$.
\end{enumerate}
\end{lemma}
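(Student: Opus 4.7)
The plan is to reduce the hypothesis to a classical scalar integral inequality by the change of variable $\tau=\phi(t)$, then combine a differential inequality (or Gronwall trick) with the monotonicity of $E$, introducing a free parameter that is optimized at the end. Since $\phi\colon\mathbb R^+\to\mathbb R^+$ is a $C^1$-diffeomorphism, the function $F(\tau):=E(\phi^{-1}(\tau))$ is non-increasing with $F(0)=E(0)$, and the substitution $u=\phi(s)$ converts the hypothesis into
\[
\int_{\tau}^{+\infty}F(u)^{1+\sigma}\,du\le \frac{F(0)^{\sigma}}{\omega}\,F(\tau),\qquad \tau\ge 0.\qquad(\ast)
\]
Hence it suffices to prove the two conclusions for such an $F$, with $\phi(t)$ replaced by $t$.

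Put $H(\tau):=\int_\tau^{+\infty}F(u)^{1+\sigma}\,du$, so that $H$ is absolutely continuous with $H'=-F^{1+\sigma}$ a.e., and $(\ast)$ reads $H\le \omega^{-1}F(0)^{\sigma}F$. For $\sigma>0$, substituting $F\ge \omega H/F(0)^{\sigma}$ into $-H'=F^{1+\sigma}$ gives the autonomous inequality $-H'\ge (\omega/F(0)^{\sigma})^{1+\sigma}H^{1+\sigma}$, equivalently $(H^{-\sigma})'\ge \sigma(\omega/F(0)^{\sigma})^{1+\sigma}$. Integrating from $0$ to $\tau$ and using $H(0)\le F(0)^{1+\sigma}/\omega$ (from $(\ast)$ at $\tau=0$) yields $H(\tau)\le F(0)^{1+\sigma}/\bigl[\omega(1+\sigma\omega\tau)^{1/\sigma}\bigr]$. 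For $\sigma=0$ the preceding argument degenerates, so I would instead use a Gronwall device: the function $G(\tau):=H(\tau)e^{\omega\tau}$ satisfies $G'(\tau)=e^{\omega\tau}\bigl(\omega H(\tau)-F(\tau)\bigr)\le 0$ by $(\ast)$, so $H(\tau)\le H(0)e^{-\omega\tau}\le F(0)e^{-\omega\tau}/\omega$.

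To turn bounds on $H$ into pointwise bounds on $F$, I would exploit the monotonicity of $F$: for every $T>0$,
\[
T\,F(\tau+T)^{1+\sigma}\le \int_{\tau}^{\tau+T}F(u)^{1+\sigma}\,du\le H(\tau).
\]
Writing $s=\tau+T$ gives $F(s)^{1+\sigma}\le H(s-T)/T$ for every admissible $T\in(0,s]$, and minimizing the right-hand side in $T$ produces the sharp rates. For $\sigma>0$ the critical point is $T_{\ast}=(1+\sigma\omega s)/[\omega(1+\sigma)]$, which lies in $(0,s]$ once $\omega s\ge 1$ (for $\omega s<1$ the trivial bound $F(s)\le F(0)$ already lies below the claim); plugging $T_\ast$ back yields exactly $F(s)\le F(0)\bigl((1+\sigma)/(1+\sigma\omega s)\bigr)^{1/\sigma}$. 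For $\sigma=0$ the optimum is $T_\ast=1/\omega$, which produces the prefactor $e$ and gives $F(s)\le F(0)e^{1-\omega s}$. Returning to $t=\phi^{-1}(s)$ completes the proof.

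The main obstacle, and the only genuinely non-routine step, is the optimization of the auxiliary parameter $T$: it is precisely here that the characteristic constants $(1+\sigma)^{1/\sigma}$ and $e$ in the stated bounds are generated, and handling the small-$s$ regime (where $T_\ast>s$) requires a separate trivial estimate. Everything else (change of variable, separable ODE, Gronwall identity) is routine, with all differential manipulations read almost everywhere since $H$ is merely absolutely continuous.
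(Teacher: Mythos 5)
The paper does not prove this lemma at all: it is quoted verbatim from Martinez \cite{M1999}, so there is no in-paper argument to compare against. Your proof is correct and is essentially the standard Martinez/Komornik argument that the cited reference uses: the substitution $u=\phi(s)$ reduces to the unweighted inequality, the differential inequality for $H(\tau)=\int_\tau^{+\infty}F^{1+\sigma}$ gives the decay of $H$, and the monotonicity bound $TF(\tau+T)^{1+\sigma}\le H(\tau)$ with the optimized $T_\ast$ recovers exactly the constants $(1+\sigma)^{1/\sigma}$ and $e$ (your computation of $T_\ast$ and the small-$s$ case both check out). The only nitpick is the phrase ``$\phi$ is a $C^1$-diffeomorphism'': $\phi'$ may vanish, so $\phi^{-1}$ need not be $C^1$; but your argument only uses that $\phi^{-1}$ is increasing and the one-sided change-of-variables formula $\int f(\phi(s))\phi'(s)\,ds=\int f(u)\,du$, so nothing breaks.
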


 \begin{lemma}\label{lem3.1}
 Suppose that $m(x),~p(x)$ satisfy \eqref{1.2} \eqref{1.3}. Assume further that $g$ satisfies \eqref{2.1} and
\[E(0)<E_1,\quad \lambda_1>\lambda(0),\]
there exists a constant $0<\lambda_2<\lambda_1$ such that
\[l\|\nabla u(t)\|_2^2\leq \lambda_2^2\quad \forall~t\in [0,T_{max}).\]
\end{lemma}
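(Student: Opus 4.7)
The plan is a standard potential-well / invariant-set argument, adapted to the variable-exponent setting. Throughout, set $\lambda(t):=l^{1/2}\|\nabla u(t)\|_2$, which is continuous on $[0,T_{max})$, so I can reason by continuity. First I would estimate $E(t)$ from below in terms of $\lambda(t)$. Using (2.1) to write $1-\int_0^tg(s)\,ds\geq l$ and dropping the nonnegative kinetic and memory terms in the definition (2.2), I get
\[
E(t)\ \geq\ \tfrac12\lambda(t)^2\ -\ \frac{b}{p_1}\int_\Omega |u(t)|^{p(x)}\,dx.
\]

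Next I would bound the source integral via the Orlicz-Sobolev machinery. By (add2.1) and the embedding (add2),
\[
\int_\Omega |u(t)|^{p(x)}\,dx\ \leq\ \max\bigl\{\|u\|_{p(x)}^{p_1},\|u\|_{p(x)}^{p_2}\bigr\}, \qquad \|u\|_{p(x)}\leq B\|\nabla u\|_2\leq B_1\lambda(t),
\]
since $B_1\geq B/l^{1/2}$. The key observation that eliminates the awkward ``max'' is that on the would-be invariant set $\{\lambda\leq\lambda_1\}$ the $L^{p(x)}$-norm is automatically $\leq 1$: from the definition $\lambda_1^{p_1-2}=1/(bB_1^{p_1})$ and from $B_1\geq 1/b^{1/2}$ one checks $(B_1\lambda_1)^{p_1-2}=1/(bB_1^2)\leq 1$, hence $B_1\lambda_1\leq 1$. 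Therefore whenever $\lambda(t)\leq\lambda_1$,
\[
\int_\Omega |u(t)|^{p(x)}\,dx\ \leq\ \|u\|_{p(x)}^{p_1}\ \leq\ B_1^{p_1}\lambda(t)^{p_1},\qquad\text{so}\qquad E(t)\ \geq\ h(\lambda(t)),
\]
where $h(s):=\tfrac12 s^2-\tfrac{bB_1^{p_1}}{p_1}s^{p_1}$.

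A direct calculation shows $h'(s)=s(1-bB_1^{p_1}s^{p_1-2})$, so $h$ is strictly increasing on $[0,\lambda_1]$ with maximum $h(\lambda_1)=\lambda_1^2(\tfrac12-\tfrac{1}{p_1})=E_1$. Since $E(0)<E_1$ and $\lambda(0)<\lambda_1$, by the intermediate value theorem there is a unique $\lambda_2\in(0,\lambda_1)$ with $h(\lambda_2)=E(0)$, and the bound $h(\lambda(0))\leq E(0)=h(\lambda_2)$ combined with monotonicity of $h$ on $[0,\lambda_1]$ gives $\lambda(0)\leq\lambda_2$.

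Finally I would prove $\lambda(t)\leq\lambda_2$ for all $t\in[0,T_{max})$ by contradiction. If not, continuity of $\lambda$ and $\lambda(0)\leq\lambda_2$ produce some $t_1>0$ with $\lambda_2<\lambda(t_1)<\lambda_1$. On this sub-interval the inequality $E(t)\geq h(\lambda(t))$ is valid, hence using monotonicity of $h$ on $[0,\lambda_1]$ and $E'(t)\leq 0$ from (2.3),
\[
h(\lambda_2)=E(0)\geq E(t_1)\geq h(\lambda(t_1))>h(\lambda_2),
\]
a contradiction. The conclusion $l\|\nabla u(t)\|_2^2=\lambda(t)^2\leq\lambda_2^2$ follows. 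The main obstacle—one unique to the variable-exponent case—is controlling the integral $\int_\Omega |u|^{p(x)}\,dx$ uniformly by a single power of $\|\nabla u\|_2$; the trick is the arithmetic identity $(B_1\lambda_1)^{p_1-2}\leq 1$, which forces $\|u\|_{p(x)}\leq 1$ throughout the stable set and lets the $p_1$-branch of (add2.1) do all the work.
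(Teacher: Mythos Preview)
Your argument is correct and follows essentially the same potential-well strategy as the paper: both bound $E(t)$ below by a function of $\lambda(t)=l^{1/2}\|\nabla u(t)\|_2$ that peaks at $\lambda_1$ with value $E_1$, locate $\lambda_2\in(0,\lambda_1)$ with $f(\lambda_2)=E(0)$, and then run the continuity/contradiction step. The only cosmetic difference is that the paper carries the piecewise function $f(\lambda)=\tfrac12\lambda^2-\tfrac{b}{p_1}B_1^{p_1}\max\{\lambda^{p_1},\lambda^{p_2}\}$ and afterwards notes $\lambda_1<1$, whereas you observe at the outset that $B_1\lambda_1\le1$ (from $B_1\ge b^{-1/2}$) so that on $[0,\lambda_1]$ the $p_1$-branch already dominates and the single function $h(s)=\tfrac12 s^2-\tfrac{bB_1^{p_1}}{p_1}s^{p_1}$ suffices; this is a clean shortcut but not a different method.
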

\begin{proof}
It follows from \eqref{add2} that
\begin{equation}\label{3.2}
\begin{split}
    E(t)&\geq\frac12\Big(1-\int_0^t g(s)ds\Big)\|\nabla u(t)\|_2^2-b\int_\Omega\frac{|u(t)|^{p(x)}}{p(x)}dx\\
    &\geq\frac{l}{2}\|\nabla u(t)\|_2^2-\frac{b}{p_1}\max\Big\{\|u(t)\|_{p(x)}^{p_1},\|u(t)\|_{p(x)}^{p_2}\Big\}\\
    &\geq\frac{l}{2}\|\nabla u(t)\|_2^2-\frac{b}{p_1}\max\Big\{(B\|\nabla u(t)\|_2)^{p_1},(B\|\nabla u(t)\|_2)^{p_1}\Big\}\\
    &\geq\frac{1}{2}\lambda^2(t)-\frac{b}{p_1}B_1^{p_1}\max\Big\{\lambda^{p_1}(t),\lambda^{p_2}(t)\Big\}
    :=f(\lambda(t))
\end{split}
\end{equation}
with $\lambda(t)=l^{\frac12}\|\nabla u(t)\|_2.$ Clearly, $f(\lambda(t))$ satisfies the following properties:
\begin{align*}
f'(\lambda(t))&= \begin{cases}
\lambda(t)-\frac{bB_1^{p_1}p_2}{p_1}\lambda^{p_2-1}(t)<0&\quad\text{for~}\lambda(t)>1,\\
\lambda(t)-bB_1^{p_1}\lambda^{p_1-1}(t)&\quad\text{for~}0<\lambda(t)<1;
\end{cases} \\
f'_+(1)&=1-\frac{bB_1^{p_1}p_2}{p_1}<0,\quad f'_-(1) =1-bB_1^{p_1}<0;\\
f'(\lambda_1)&=0,\quad0<\lambda_1<1.
\end{align*}
It is easily verified that $f(\lambda(t))$ is strictly increasing for $0<\lambda(t)<\lambda_1$ and $f(\lambda(t))$ is strictly decreasing for $\lambda_1<\lambda(t)$, $f(\lambda(t))\rightarrow -\infty$ as  $\lambda\rightarrow +\infty$, and $f(\lambda_1)=E_1$.

Since $0<E(0)<E_1$, there exists a positive constant $\lambda_{2}<\lambda_{1}$ such that $f(\lambda_{2})=E(0)$. Recalling $\lambda(0)=l^{\frac12}\|\nabla u_0\|_2$, then we have $f(\lambda(0))\leq E(0)=f(\lambda_{2})$ by \eqref{3.2}, which implies that $\lambda(0)\leq\lambda_{2}$. In order to prove $l\|\nabla u(t)\|_2^2\leq \lambda_2^2$ for any $t\in(0,T_{max})$, we suppose on the contrary that $l\|\nabla u(t_0)\|_2^2> \lambda_2^2$ for some $t_{0}\in(0,T_{max})$ since $\lambda(0)\leq\lambda_{2}<\lambda_{1}$. The continuity of $l\|\nabla u(t)\|_2^2$ and $\lambda(0)<\lambda_{1}$ implies that we can choose $0<t^{*}<t_{0}$ such that $\lambda_{2}<l^{\frac12}\|\nabla u(t^{*})\|_2<\lambda_{1}.$ Therefore, it follows from \eqref{3.2} that
\[E(0)=f(\lambda_{2})<f(l^{\frac12}\|\nabla u(t^{*})\|_2)\leq E(t^{*}),\]
 which contradicts \eqref{2.3}.
\end{proof}

For the sake of simplicity, define the quadratic energy $\mathscr{E}(t)$ by
\[\mathscr{E}(t)=\frac12\|u_t(t)\|_2^2+\frac12\Big(1-\int_0^t g(s)ds\Big)\|\nabla u(t)\|_2^2+\frac12\int_0^tg(t-s)\|\nabla u(t)-\nabla u(s)\|_2^2ds,\]
 it follows from \eqref{2.2} that
\begin{equation}\label{add3.1}
    E(t)=\mathscr{E}(t)-b\int_\Omega\frac{|u(t)|^{p(x)}}{p(x)}dx.
\end{equation}

\begin{lemma}\label{lem3.2}
Let all assumptions of Lemma $\ref{lem3.1}$ hold, then for any $t\in[0, T_{max})$,
\begin{equation}\label{3.3}
    b\int_\Omega\frac{|u(t)|^{p(x)}}{p(x)}dx\leq \tilde{C}E(t)
    \leq \tilde{C}E(0);
\end{equation}
\begin{equation}\label{3.5}
     \mathscr{E}(t)\leq (1+\tilde{C})E(t)\leq (1+\tilde{C})E(0)
\end{equation}
with \[\tilde{C}= \frac{\frac{2bB_1^{p_2}}{p_1}\lambda_2^{p_1-2}}{1-\frac{2bB_1^{p_2}}{p_1}\lambda_2^{p_1-2}}.\]
\end{lemma}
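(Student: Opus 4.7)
The plan is to combine the gradient bound from Lemma \ref{lem3.1} with the Sobolev embedding \eqref{add2} and the norm-modular relation \eqref{add2.1} to control $b\int_\Omega |u|^{p(x)}/p(x)\,dx$ by a multiple of $\|\nabla u(t)\|_2^2$, then close the resulting inequality against $E(t)$ via the identity $\mathscr{E}(t)=E(t)+b\int_\Omega |u(t)|^{p(x)}/p(x)\,dx$ from \eqref{add3.1}.

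First I would record the consequences of Lemma \ref{lem3.1}: $\|\nabla u(t)\|_2\le \lambda_2/l^{1/2}$ and hence, by the Sobolev embedding \eqref{add2} together with $B_1\ge B/l^{1/2}$, $\|u(t)\|_{p(x)}\le B\|\nabla u(t)\|_2\le B_1\lambda_2$. The proof of Lemma \ref{lem3.1} shows $0<\lambda_1<1$, so in particular $\lambda_2<\lambda_1<1$. Combined with $B_1\ge 1$, this yields the key monotonicity facts $B_1^{p_1-2}\le B_1^{p_2-2}$ and $\lambda_2^{p_2-2}\le\lambda_2^{p_1-2}$, which will let me dominate the $\max\{\cdot,\cdot\}$ from \eqref{add2.1} by a single clean expression.

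Next, using \eqref{add2.1}, I would estimate
\[
b\int_\Omega \frac{|u(t)|^{p(x)}}{p(x)}\,dx\le \frac{b}{p_1}\max\bigl\{\|u(t)\|_{p(x)}^{p_1},\|u(t)\|_{p(x)}^{p_2}\bigr\},
\]
then split each power as $\|u\|_{p(x)}^{p_i}=\|u\|_{p(x)}^{p_i-2}\|u\|_{p(x)}^{2}$. Bounding the first factor by $(B_1\lambda_2)^{p_i-2}$ and using the monotonicity above to absorb both cases into $B_1^{p_2-2}\lambda_2^{p_1-2}$, and bounding the second factor by $B^2\|\nabla u\|_2^2\le B_1^2\, l\|\nabla u(t)\|_2^2$, I arrive at
\[
b\int_\Omega \frac{|u(t)|^{p(x)}}{p(x)}\,dx\le \frac{bB_1^{p_2}\lambda_2^{p_1-2}}{p_1}\,l\|\nabla u(t)\|_2^{2}.
\]

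To close the loop I would use $\mathscr{E}(t)\ge \frac{l}{2}\|\nabla u(t)\|_2^{2}$ and the identity \eqref{add3.1} to write $l\|\nabla u(t)\|_2^{2}\le 2\mathscr{E}(t)=2E(t)+2b\int_\Omega |u(t)|^{p(x)}/p(x)\,dx$. Substituting back gives an inequality of the form $I\le C_0E(t)+C_0 I$ with $C_0=\frac{2bB_1^{p_2}\lambda_2^{p_1-2}}{p_1}$; the assumption $E(0)<E_1$ together with $\lambda_2<\lambda_1=(bB_1^{p_1})^{-1/(p_1-2)}$ forces $C_0<1$, so rearrangement yields \eqref{3.3} with $\tilde C=C_0/(1-C_0)$. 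Then \eqref{3.5} is immediate from \eqref{add3.1} and the monotonicity $E(t)\le E(0)$ of \eqref{2.3}. The only slightly delicate step is the uniform domination of the $\max$ by the single expression $B_1^{p_2}\lambda_2^{p_1-2}l\|\nabla u\|_2^{2}$; everything else is routine rearrangement.
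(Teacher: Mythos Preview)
Your proposal is correct and follows essentially the same route as the paper: both bound $b\int_\Omega |u|^{p(x)}/p(x)\,dx$ via \eqref{add2.1} and the embedding \eqref{add2}, pull out the factor $l\|\nabla u(t)\|_2^2$, use $\lambda_2<1$ and $B_1\ge 1$ to collapse the $\max$ into $B_1^{p_2}\lambda_2^{p_1-2}$, and then close against $l\|\nabla u\|_2^2\le 2\mathscr{E}(t)=2E(t)+2I$ to rearrange. One minor caveat: your claim that $\lambda_2<\lambda_1=(bB_1^{p_1})^{-1/(p_1-2)}$ forces $C_0<1$ is slightly imprecise since $C_0$ carries $B_1^{p_2}$ rather than $B_1^{p_1}$; the paper likewise does not explicitly verify this and simply records $\tilde C$ by the stated formula.
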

\begin{proof}
It directly follows from \eqref{3.2} \eqref{2.2} and Lemma $\ref{lem3.1}$ that
\begin{equation*}
\begin{split}
b\int_\Omega\frac{|u(t)|^{p(x)}}{p(x)}dx
&\leq\frac{b}{p_1}\max\Big\{\|u(t)\|_{p(x)}^{p_1},\|u(t)\|_{p(x)}^{p_2}\Big\}\\
&\leq\frac{b}{p_1}\max\Big\{\Big(B\|\nabla u(t)\|_2\Big)^{p_1},\Big(B\|\nabla u(t)\|_2\Big)^{p_2}\Big\}\\
&\le\frac{bB_1^{p_2}}{p_1}\max\Big\{\Big(l^{\frac12}\|\nabla u(t)\|_2\Big)^{p_1-2},
\Big(l^{\frac12}\|\nabla u(t)\|_2\Big)^{p_2-2}\Big\}l\|\nabla u(t)\|_2^2\\
&\leq\frac{2bB_1^{p_2}}{p_1}\max\Big\{\lambda_2^{p_1-2},
\lambda_2^{p_2-2}\Big\}\Big(E(t)+b\int_\Omega\frac{|u(t)|^{p(x)}}{p(x)}dx\Big)\\
&=\frac{2bB_1^{p_2}}{p_1}\lambda_2^{p_1-2}\Big(E(t)+b\int_\Omega\frac{|u(t)|^{p(x)}}{p(x)}dx\Big),
\end{split}
\end{equation*}
which implies \eqref{3.3} by using \eqref{2.3}.
Combining \eqref{3.3} and \eqref{add3.1}, we get
\begin{equation*}
  \mathscr{E}(t)= E(t)+b\int_\Omega\frac{|u(t)|^{p(x)}}{p(x)}dx\leq (1+\tilde{C})E(t)\leq (1+\tilde{C})E(0).
\end{equation*}
\end{proof}

\begin{remark}
 Clearly, \eqref{3.5} indicates that $\mathscr{E}(t)$ is uniformly bounded for all $t\in[0, T_{max})$. This implies that the solution is global and the maximal existence time $T_{max} =\infty$. And we have
 \begin{equation}\label{add3.2}
    0\leq E(t)\leq E(0)\quad \text{for all }t\in[0,\infty).
 \end{equation}
\end{remark}

In what follows, let us prove our theorem based on lemmas above.

\begin{proof3.1}
Multiplying the first equality of problem \eqref{1.1} by $\xi(t)E^{\gamma}(t)u(t)$ and integrating over $\Omega\times(s,T)$ with $\gamma$ being obtained later yield
\begin{equation}\label{3.6}
\begin{split}
    &\int_s^T\xi(t)E^{\gamma}(t)\int_\Omega u_{tt}(t)u(t)dxdt+\int_s^T\xi(t)E^{\gamma}(t)\|\nabla u(t)\|_2^2dt\\
    &\quad\quad+\int_s^T\xi(t)E^{\gamma}(t)\int_\Omega\int_0^t g(t-s)\Delta u(s)dsu(t)dxdt\\
    &\quad\quad+a\int_s^T\xi(t)E^{\gamma}(t)\int_\Omega|u_t(t)|^{m(x)-2}u_t(t)u(t)dxdt\\
    &\quad=b\int_s^T\xi(t)E^{\gamma}(t)\int_\Omega|u(t)|^{p(x)}dxdt.
\end{split}
\end{equation}
Integrating by parts implies that 
\begin{equation}\label{3.7}
    \begin{split}
       &\int_s^T\int_\Omega\int_0^t g(t-s)\Delta u(s)ds u(t)dx dt =-\int_s^T\int_\Omega\int_0^t g(t-s)\nabla u(s)\nabla u(t) ds dxdt \\
         &\quad=-\int_s^T\int_\Omega\int_0^t g(t-s)\nabla  u(t)\nabla (u(s)-u(t))ds dxdt-\int_s^T\int_0^t g(s)ds\|\nabla u(t)\|_2^2dt.
     \end{split}
\end{equation}
Inserting \eqref{3.7} into \eqref{3.6}, one obtains

\begin{equation}\label{3.8}
  \begin{split}
    &\int_s^T \xi(t)E^\gamma(t)\frac{d}{dt}\int_\Omega u(t)u_t(t)dxdt-\int_{s}^{T}\xi(t)E^{\gamma}(t)\|u_{t}(t)\|_2^{2}dt\\
    &\quad\quad+\int_s^T\xi(t)E^\gamma(t)\Big(1-\int_0^t g(s)ds\Big)\|\nabla u(t)\|_2^2dt\\
    &\quad\quad-\int_s^T\xi(t)E^\gamma(t)\int_\Omega\int_0^t g(t-s)\nabla  u(t)\nabla (u(s)-u(t))ds dxdt\\
    &\quad\quad+a\int_s^T\xi(t)E^\gamma(t)\int_\Omega|u_t(t)|^{m(x)-2}u_t(t)u(t)dxdt\\
    &\quad=b\int_s^T\xi(t)E^\gamma(t)\int_\Omega|u(t)|^{p(x)}dxdt.
\end{split}
\end{equation}
The definition of $E(t)$ in \eqref{2.2} and \eqref{3.8} indicate that
\begin{equation}\label{3.9}
    \begin{split}
    &2\int_s^T \xi(t)E^{\gamma+1}(t)dt=-\int_{s}^{T}\frac{d}{d t}\Big[\xi(t)E^{\gamma}(t)\int_{\Omega}uu_{t}dx\Big]dt\\
    &\quad+
\gamma\int_{s}^{T}\xi(t)E^{\gamma-1}(t)E'(t)\int_{\Omega}uu_{t}dxdt+
\gamma\int_{s}^{T}\xi'(t)E^{\gamma}(t)\int_{\Omega}uu_{t}dxdt\\
&\quad+2\int_{s}^{T}\xi(t)E^{\gamma}(t)\|u_{t}(t)\|_2^{2}dt+\int_s^T\xi(t)E^\gamma(t)\int_0^tg(t-s)\|\nabla u(t)-\nabla u(s)\|_2^2dsdt\\
    &\quad+\int_s^T\xi(t)E^\gamma(t)\int_\Omega\int_0^t g(t-s)\nabla  u(t)\nabla (u(s)-u(t))ds dxdt \\
    &\quad-a\int_s^T\xi(t)E^\gamma(t)\int_\Omega|u_t(t)|^{m(x)-2}u_t(t)u(t)dxdt\\
    &\quad+ b\int_s^T\xi(t)E^\gamma(t)\int_\Omega\Big(1-\frac{2}{p(x)}\Big)|u(t)|^{p(x)}dxdt,
\end{split}
\end{equation}
where we use the equality
\begin{equation*}
\begin{split}
&\int_s^T \xi(t)E^\gamma(t)\frac{d}{dt}\int_\Omega u(t)u_t(t)dxdt=-\int_{s}^{T}\frac{d}{d t}\Big[\xi(t)E^{\gamma}(t)\int_{\Omega}uu_{t}dx\Big]dt\\
&\quad+
\gamma\int_{s}^{T}\xi(t)E^{\gamma-1}(t)E'(t)\int_{\Omega}uu_{t}dxdt+
\int_{s}^{T}\xi'(t)E^{\gamma}(t)\int_{\Omega}uu_{t}dxdt.
\end{split}
\end{equation*}

In what follows, we estimate each term in the right-hand side of \eqref{3.9}.

\textbf{(1)} Estimate for $\Big|-\int_{s}^{T}\frac{d}{d t}\Big[\xi(t)E^{\gamma}(t)\int_{\Omega}uu_{t}dx\Big]dt\Big|$\\
Recalling \eqref{3.5}, we have
\begin{equation}\label{addequ1}
    \|\nabla u(t)\|_2^2\leq \frac{2(1+\tilde{C})}{l}E(t)\leq \frac{2(1+\tilde{C})}{l}E(s)\leq \frac{2(1+\tilde{C})}{l}E(0)\quad \forall t\ge s.
\end{equation}
It follows from Cauchy's inequality, $\omega_1\|u\|_2^2\leq \|\nabla u\|_2^2$, \eqref{3.5}, \eqref{addequ1}, \eqref{addequ1}  and \eqref{2.3} that
\begin{equation}\label{3.10}
\begin{split}
&\Big|-\int_{s}^{T}\frac{d}{d t}\Big[\xi(t)E^{\gamma}(t)\int_{\Omega}u(t)u_{t}(t)dx\Big]dt\Big|\\
&\quad\leq\Big|\xi(s)E^\gamma(s)\int_{\Omega}u(s)u_t(s)dx-\xi(T)E^\gamma(T)\int_{\Omega}u(T)u_t(T)dx\Big|\\
&\quad\leq \frac{\xi(s)E^\gamma(s)}{2}\Big[\|u(s)\|_2^2+\|u_t(s)\|_2^2
+\|u(T)\|_2^2+\|u_t(T)\|_2^2\Big]\\
&\quad\leq \xi(s)E^\gamma(s)\Big[\frac{1}{2\omega_1}\Big(\|\nabla u(s)\|_2^2+\|\nabla u(T)\|_2^2\Big)+\frac{1}{2}\Big(\|u_t(s)\|_2^2+\|u_t(T)\|_2^2\Big)\Big]\\
&
\quad\leq \Big[\frac{2(1+\tilde{C})}{\omega_1l}+2(1+\tilde{C})\Big]\xi(0)E^{\gamma}(0)E(s),
\end{split}
\end{equation}
here $\omega_1$ is the  first eigenvalue of $-\Delta$ operator under Dirichlet boundary.

\textbf{(2)} Estimate for $\Big|
\gamma\int_{s}^{T}\xi(t)E^{\gamma-1}(t)E'(t)\int_{\Omega}uu_{t}dxdt\Big|$\\
Similar to \eqref{3.10}, using again Cauchy's inequality, $\omega_1\|u\|_2^2\leq \|\nabla u\|_2^2$, \eqref{3.5}  and \eqref{2.3}, we get
\begin{equation}\label{3.11}
\begin{split}
&\Big|
\gamma\int_{s}^{T}\xi(t)E^{\gamma-1}(t)E'(t)\int_{\Omega}u(t)u_{t}(t)dxdt\Big|\\
&\quad=-\gamma\int_{s}^{T}\xi(t)E^{\gamma-1}(t)E'(t)\int_{\Omega}|u(t)||u_{t}(t)|dxdt\\
&\quad\leq-\frac{\gamma}{2}\int_{s}^{T}\xi(t)E^{\gamma-1}(t)E'(t)(\|u(t)\|_2^2+\|u_{t}(t)\|_2^{2})dt\\
&\quad\leq-\frac{\gamma }{2\omega_1}\int_{s}^{T}\xi(t)E^{\gamma-1}(t)E'(t)\|\nabla u(t)\|_2^2dt
-\frac{\gamma }{2}\int_{s}^{T}\xi(t)E^{\gamma-1}(t)E'(t)\|u_{t}(t)\|_2^{2}dt\\
&\quad\leq-\frac{\gamma(1+\tilde{C})}{\omega_1l}
\int_{s}^{T}\xi(t)E^{\gamma}(t)E'(t)dt
-\gamma(1+\tilde{C})\int_{s}^{T}\xi(t)E^{\gamma}(t)E'(t)dt\\
&\quad\leq-\frac{\gamma(1+\tilde{C})}{\omega_1l(\gamma+1)}
\int_{s}^{T}\frac{d}{dt}[\xi(t)E^{\gamma+1}(t)]dt+\frac{\gamma(1+\tilde{C})}{\omega_1l(\gamma+1)}\int_{s}^{T}\xi'(t)E^{\gamma+1}(t)dt\\
&\quad\quad
-\frac{\gamma(1+\tilde{C})}{\gamma+1}\int_{s}^{T}\frac{d}{dt}[\xi(t)E^{\gamma+1}(t)]dt+\frac{\gamma(1+\tilde{C})}{\gamma+1}\int_{s}^{T}\xi'(t)E^{\gamma+1}(t)dt\\
&\quad\leq\frac{\gamma(1+\tilde{C})}{\omega_1l(\gamma+1)}\Big[\xi(s)E^{\gamma+1}(s)-\xi(T)E^{\gamma+1}(T)\Big]
+\frac{\gamma(1+\tilde{C})}{\gamma+1}\Big[\xi(s)E^{\gamma+1}(s)-\xi(T)E^{\gamma+1}(T)\Big]\\
&\quad\leq\Big[\frac{\gamma(1+\tilde{C})}{\omega_1l(\gamma+1)}+\frac{\gamma(1+\tilde{C})}{\gamma+1}\Big]\xi(0)E^{\gamma}(0)E(s).
\end{split}
\end{equation}

\textbf{(3)} Estimate for $\Big|
\int_{s}^{T}\xi'(t)E^{\gamma}(t)\int_{\Omega}uu_{t}dxdt\Big|
$\\
Similar to \eqref{3.11}, one has
\begin{equation}\label{9add3.11}
\begin{split}
&\Big|
\int_{s}^{T}\xi'(t)E^{\gamma}(t)\int_{\Omega}uu_{t}dxdt\Big|\\
&\quad\leq-\frac{(1+\tilde{C})}{\omega_1l}
\int_{s}^{T}\xi'(t)E^{\gamma+1}(t)dt
-(1+\tilde{C})\int_{s}^{T}\xi'(t)E^{\gamma+1}(t)dt\\
&\quad\leq-\Big[\frac{(1+\tilde{C})}{\omega_1l}+(1+\tilde{C})\Big]
\Big[\xi(t)E^{\gamma+1}(t)|_s^T-(\gamma+1) \int_{s}^{T}\xi(t)E^{\gamma}(t)E'(t)dt\Big]\\
&\quad\leq\Big[\frac{(1+\tilde{C})}{\omega_1l}+(1+\tilde{C})\Big]\xi(0)E^{\gamma}(0)E(s).
\end{split}
\end{equation}

\textbf{(4)} Estimate for $2\int_{s}^{T}\xi(t)E^{\gamma}(t)\|u_{t}(t)\|_2^{2}dt$\\
To complete this estimate, we discuss by dividing  $m(x)$ into three cases.

\textsc{Case} 1: For $m_1>2$, using the continuous embedding $L^{m(x)}(\Omega)\hookrightarrow L^{2}(\Omega)$, \eqref{2.1}, Young's inequality with $0<\epsilon<1$ and \eqref{2.3}, then
\begin{equation}\label{3.12}
\begin{split}
&2\int_{s}^{T}\xi(t)E^{\gamma}(t)\|u_{t}(t)\|_2^{2}dt\\
&\quad\leq 2(1+|\Omega|)^2\int_{s}^{T}\xi(t)E^{\gamma}(t)\|u_t(t)\|_{m(x)}^2dt\\
&\quad\leq 2(1+|\Omega|)^2\int_{s}^{T}\xi(t)E^{\gamma}(t)\max\Big\{\Big(\int_\Omega |u_t(t)|^{m(x)}dx\Big)^{\frac{2}{m_1}},\Big(\int_\Omega |u_t(t)|^{m(x)}dx\Big)^{\frac{2}{m_2}}\Big\}dt\\
&\quad\leq 2(1+|\Omega|)^2\epsilon^{\frac{m_2}{m_2-2}}\int_{s}^{T}\xi(t)\max\Big\{E^{\frac{m_1\gamma}{m_1-2}}(t),E^{\frac{m_2\gamma}{m_2-2}}(t)\Big\}dt\\
&\quad\quad+
2(1+|\Omega|)^2\epsilon^{-\frac{m_2}{2}}\int_{s}^{T}\xi(t)\int_\Omega |u_t(t)|^{m(x)}dxdt\\
&\quad\leq 2(1+|\Omega|)^2\epsilon^{\frac{m_2}{m_2-2}}\max\Big\{E^{\frac{2(m_2-m_1)\gamma}{(m_1-2)(m_2-2)}}(0),1\Big\}\int_{s}^{T}\xi(t)E^{\frac{\gamma m_2}{m_2-2}}(t)dt\\
&\quad\quad+
2(1+|\Omega|)^2\epsilon^{-\frac{m_2}{2}}\frac1a\int_{s}^{T}\xi(t)(-E'(t))dxdt\\
&\quad\leq 2(1+|\Omega|)^2\epsilon^{\frac{m_2}{m_2-2}}\max\Big\{E^{\frac{m_2-m_1}{m_1-2}}(0),1\Big\}\int_{s}^{T}\xi(t)E^{\gamma+1}(t)dt
\\&\quad\quad+
2(1+|\Omega|)^2\epsilon^{-\frac{m_2}{2}}\frac1a\xi(0)E(s),
\end{split}
\end{equation}
here the following inequality holds due to \eqref{add3.2}
\[
\max\Big\{E^{\frac{m_1\gamma}{m_1-2}}(t),E^{\frac{m_2\gamma}{m_2-2}}(t)\Big\}\leq \max\Big\{E^{\frac{2(m_2-m_1)\gamma}{(m_1-2)(m_2-2)}}(0),1\Big\}E^{\frac{\gamma m_2}{m_2-2}},\]
and equality $\frac{m_2\gamma}{m_2-2}=\gamma+1$ illustrates $\gamma=\frac{m_2-2}{2}$.

\textsc{Case} 2: For $m_2>m_1=2$, we similarly get
\begin{equation}\label{3.13}
\begin{split}
&2\int_{s}^{T}E^{\gamma}(t)\xi(t)\|u_{t}(t)\|_2^{2}dt\\
&\quad\leq 2(1+|\Omega|)^2\int_{s}^{T}\xi(t)E^{\gamma}(t)\max\Big\{\int_\Omega |u_t(t)|^{m(x)}dx,\Big(\int_\Omega |u_t(t)|^{m(x)}dx\Big)^{\frac{2}{m_2}}\Big\}dt\\
&\quad\leq -2(1+|\Omega|)^2\frac1a\int_{s}^{T}\xi(t)E^{\gamma}(t)E'(t)dt+2(1+|\Omega|)^2\epsilon^{\frac{m_2}{m_2-2}}\int_{s}^{T}\xi(t)E^{\frac{m_2\gamma}{m_2-2}}(t)dt\\
&\quad\quad+
2(1+|\Omega|)^2\epsilon^{-\frac{m_2}{2}}\int_{s}^{T}\int_\Omega \xi(t)|u_t(t)|^{m(x)}dxdt\\
&\quad\leq -\frac{2(1+|\Omega|)^2}{\gamma+1}\frac1a\int_s^T\frac{d}{dt}[\xi(t)E^{\gamma+1}(t)]+\frac{2(1+|\Omega|)^2}{\gamma+1}\frac1a\int_s^T\xi'(t)E^{\gamma+1}(t)dt\\
&\quad\quad+2(1+|\Omega|)^2\epsilon^{\frac{m_2}{m_2-2}}\int_{s}^{T}\xi(t)E^{\gamma+1}(t)dt+
2(1+|\Omega|)^2\epsilon^{-\frac{m_2}{2}}\xi(0)E(s)\\
&\quad\leq \frac{2(1+|\Omega|)^2}{\gamma+1}\frac1a\xi(0)E^\gamma(0)E(s)+2(1+|\Omega|)^2\epsilon^{\frac{m_2}{m_2-2}}\int_{s}^{T}\xi(t)E^{\gamma+1}(t)dt\\
&\quad\quad+
2(1+|\Omega|)^2\epsilon^{-\frac{m_2}{2}}\xi(0)E(s).
\end{split}
\end{equation}

\textsc{Case} 3: For $m_2=m_1=2$, it directly follows that
\begin{equation}\label{add3.3}
\begin{split}
    2\int_{s}^{T}\xi(t)E^{\gamma}(t)\|u_{t}(t)\|_2^{2}dt&\leq -\frac2a\int_{s}^{T}\xi(t)E^{\gamma}(t)E'(t)dt\\
    &\quad\leq \frac{2}{(\gamma+1)a}\xi(0)E^\gamma(0)E(s).
    \end{split}
\end{equation}

\textbf{(5)} Estimate for $\int_s^T\xi(t)E^\gamma(t)\int_0^tg(t-s)\|\nabla u(t)-\nabla u(s)\|_2^2dsdt$\\
We have different estimates when the relaxation function $g$ satisfies specific conditions.

\textsc{Type} \uppercase\expandafter{\romannumeral1}: $g'(t)\le -\xi(t)g(t)$.\\
Combining \eqref{2.3} with \eqref{3.5} is to obtain
\begin{equation}\label{3.14}
\begin{split}
&\int_s^T\xi(t)E^\gamma(t)\int_0^tg(t-s)\|\nabla u(t)-\nabla u(s)\|_2^2dsdt\\
&\quad\leq -\int_s^TE^\gamma(t)\int_0^tg_t(t-s)\|\nabla u(t)-\nabla u(s)\|_2^2dsdt\\
&\quad\leq 2\int_s^TE^\gamma(t)(-E'(t))dt\\
&\quad\leq \frac{2}{\gamma+1}E^{\gamma}(0)E(s).
\end{split}
\end{equation}

\textsc{Type} \uppercase\expandafter{\romannumeral2}: $g'(t)+Cg^\alpha(t)\leq 0$, where $1 <\alpha< 2.$ \\
Let us borrow the part ideas from \cite{CO2003} to deal with this type. Note that one may choose $\xi(t)=1$ in this type. Since $g(t)> 0$, then $g(t)$ decays polynomially fast, that is, $0 < g(t)\leq C(1 + t)^{-\frac{1}{\alpha-1}}$, where $\frac{1}{\alpha-1}\in(1, \infty)$. It directly follows from H\"{o}lder's inequality and Young's inequality that for $0<\sigma<1$,
\begin{equation}\label{add3.4}
\begin{split}
&\int_s^TE^\gamma(t)\int_0^tg(t-s)\|\nabla u(t)-\nabla u(s)\|_2^2dsdt\\
&\quad=\int_s^TE^\gamma(t)\int_0^t[g(t-s)]^{\frac{(1-\sigma)(\alpha-1)}{\sigma+\alpha-1}}\|\nabla u(t)-\nabla u(s)\|_2^{\frac{2(\alpha-1)}{\sigma+\alpha-1}}\\
&\quad\quad\times[g(t-s)]^{\frac{\sigma\alpha}{\sigma+\alpha-1}}\|\nabla u(t)-\nabla u(s)\|_2^{\frac{2\sigma}{\sigma+\alpha-1}}dsdt\\
&\quad\leq\int_s^TE^\gamma(t)\Big(\int_0^t[g(t-s)]^{1-\sigma}\|\nabla u(t)-\nabla u(s)\|_2^2ds\Big)^{\frac{\alpha-1}{\sigma+\alpha-1}}\\
&\quad\quad\times\Big(\int_0^t[g(t-s)]^\alpha\|\nabla u(t)-\nabla u(s)\|_2^2ds\Big)^{\frac{\sigma}{\sigma+\alpha-1}}dt\\
&\quad\leq\frac{\alpha-1}{\sigma+\alpha-1}\int_s^T\int_0^t[g(t-s)]^{1-\sigma}\|\nabla u(t)-\nabla u(s)\|_2^2dsdt\\
&\quad\quad+\frac{\sigma}{\sigma+\alpha-1}\int_s^TE^\frac{\gamma(\sigma+\alpha-1)}{\sigma}(t)\int_0^t[g(t-s)]^\alpha\|\nabla u(t)-\nabla u(s)\|_2^2dsdt.
\end{split}
\end{equation}
Thus, combining \eqref{addequ1} and $0 < g(t)\leq C(1 + t)^{-\frac{1}{\alpha-1}}$ is to get
\begin{equation}\label{add3.5}
\begin{split}
    &\frac{\alpha-1}{\sigma+\alpha-1}\int_s^T\int_0^t[g(t-s)]^{1-\sigma}\|\nabla u(t)-\nabla u(s)\|_2^2dsdt\\
    &\quad\leq \frac{\alpha-1}{\sigma+\alpha-1}\frac{4(1+\tilde{C})}{l}E(s)\int_s^T\int_0^t[g(t-s)]^{1-\sigma}dsdt\\
    &\quad= \frac{\alpha-1}{\sigma+\alpha-1}\frac{4(1+\tilde{C})}{l}E(s)\int_s^T\frac{C^{1-\sigma}}{1-\frac{1-\sigma}{\alpha-1}}(1+t)^{1-\frac{1-\sigma}{\alpha-1}}dt\\
    &\quad\leq \frac{\alpha-1}{\sigma+\alpha-1}\frac{4(1+\tilde{C})}{l}E(s)
\frac{C^{1-\sigma}}{1-\frac{1-\sigma}{\alpha-1}}\frac{1}{2-\frac{1-\sigma}{\alpha-1}}(1+T)^{2-\frac{1-\sigma}{\alpha-1}}\\
&\quad\leq \frac{\alpha-1}{\sigma+\alpha-1}\frac{4(1+\tilde{C})}{l}E(s)
\frac{C^{1-\sigma}}{1-\frac{1-\sigma}{\alpha-1}}\frac{1}{2-\frac{1-\sigma}{\alpha-1}}
\end{split}
\end{equation}
by assuming $2<\frac{1-\sigma}{\alpha-1}$, i.e. $2\alpha+\sigma<3$. Using $g'(t)+Cg^\alpha(t)\leq 0$ and \eqref{2.3}, then
\begin{equation}\label{add3.6}
    \begin{split}
    &\frac{\sigma}{\sigma+\alpha-1}\int_s^TE^\frac{\gamma(\sigma+\alpha-1)}{\sigma}(t)\int_0^t[g(t-s)]^\alpha\|\nabla u(t)-\nabla u(s)\|_2^2dsdt\\
    &\quad \leq - \frac{\sigma}{C(\sigma+\alpha-1)}\int_s^TE^\frac{\gamma(\sigma+\alpha-1)}{\sigma}(t)\int_0^tg_t(t-s)\|\nabla u(t)-\nabla u(s)\|_2^2dsdt\\
     &\quad \leq  \frac{\sigma}{C(\sigma+\alpha-1)}\int_s^TE^\frac{\gamma(\sigma+\alpha-1)}{\sigma}(t)(-E'(t))dt\\
     &\quad=\frac{\sigma}{C(\sigma+\alpha-1)}\frac{1}{\frac{\gamma(\sigma+\alpha-1)}{\sigma}+1}
     E^{\frac{\gamma(\sigma+\alpha-1)}{\sigma}}(0)E(s).
    \end{split}
\end{equation}
Thus, inserting \eqref{add3.5} and \eqref{add3.6} into \eqref{add3.4}, one obtains
\begin{equation}\label{add3.7}
\begin{split}
&\int_s^TE^\gamma(t)\int_0^tg(t-s)\|\nabla u(t)-\nabla u(s)\|_2^2dsdt\\
&\quad=\frac{\alpha-1}{\sigma+\alpha-1}\frac{4(1+\tilde{C})}{l}E(s)
\frac{C^{1-\sigma}}{1-\frac{1-\sigma}{\alpha-1}}\frac{1}{2-\frac{1-\sigma}{\alpha-1}}\\
&\quad\quad+\frac{\sigma}{C(\sigma+\alpha-1)}\frac{1}{\frac{\gamma(\sigma+\alpha-1)}{\sigma}+1}
     E^{\frac{\gamma(\sigma+\alpha-1)}{\sigma}}(0)E(s).
\end{split}
\end{equation}

\textbf{(6)} Estimate for $\Big|\int_s^T\xi(t)E^\gamma(t)\int_\Omega\int_0^t g(t-s)\nabla  u(t)\nabla (u(s)-u(t))ds dxdt\Big|$

\textsc{Type} \uppercase\expandafter{\romannumeral1}: $g'(t)\le-\xi(t)g(t)$.\\
Using Cauchy's inequality with $0<\varepsilon<1$, \eqref{addequ1} \eqref{2.1} and \eqref{3.14}, we have
\begin{equation}\label{3.15}
\begin{split}
  &\Big|\int_s^T \xi(t)E^\gamma(t)\int_\Omega\int_0^t g(t-s)\nabla  u(t)\nabla (u(s)-u(t))dsdt dx\Big|\\
   &\quad\leq\frac\varepsilon2\int_s^T\xi(t)E^\gamma(t)\int_0^t g(t-s)\|\nabla  u(t)\|_2^2dsdt \\
   &\quad\quad+\frac{1}{2\varepsilon}\int_s^T\xi(t)E^\gamma(t)\int_0^t g(t-s)\|\nabla (u(s)-u(t))\|_2^2dsdt\\
   &\quad\leq\varepsilon(1+\tilde{C})\frac{1-l}{l}\int_s^T\xi(t)E^{\gamma+1}(t)dt +\frac{1}{2\varepsilon}\frac{2}{\gamma+1}E^{\gamma}(0)E(s).
\end{split}
\end{equation}

\textsc{Type} \uppercase\expandafter{\romannumeral2}: $g'(t)+Cg^\alpha(t)\leq 0$, where $1 <\alpha< 2.$ \\
Similarly, using \eqref{add3.7}, it follows that 
\begin{equation}\label{addeq1}
\begin{split}
  &\Big|\int_s^T E^\gamma(t)\int_\Omega\int_0^t g(t-s)\nabla  u(t)\nabla (u(s)-u(t))dsdt dx\Big|\\
   &\quad\leq\varepsilon(1+\tilde{C})\frac{1-l}{l}\int_s^TE^{\gamma+1}(t)dt \\ &\quad\quad+\frac{1}{2\varepsilon}\frac{\alpha-1}{\sigma+\alpha-1}\frac{4(1+\tilde{C})}{l}E(s)
\frac{C^{1-\sigma}}{1-\frac{1-\sigma}{\alpha-1}}\frac{1}{2-\frac{1-\sigma}{\alpha-1}}\\
&\quad\quad+\frac{1}{2\varepsilon}\frac{\sigma}{C(\sigma+\alpha-1)}\frac{1}{\frac{\gamma(\sigma+\alpha-1)}{\sigma}+1}
     E^{\frac{\gamma(\sigma+\alpha-1)}{\sigma}}(0)E(s).
\end{split}
\end{equation}

\textbf{(7)} Estimate for $\Big|-a\int_s^T\xi(t)E^\gamma(t)\int_\Omega|u_t(t)|^{m(x)-2}u_t(t)u(t)dxdt\Big|$\\
It follows by using $\hbox{Young's}$ inequality with $0<\delta<1$ that
\begin{equation}
\label{3.16}
\begin{split}
&\Big|-a\int_s^T\xi(t)E^\gamma(t)\int_\Omega|u_t(t)|^{m(x)-2}u_t(t)u(t)dxdt\Big|\\
&\quad\leq a\int_{s}^{T} \xi(t)E^\gamma(t) \Big[\int_{\Omega}\Big(\delta^{-\frac{m_1}{m_1-1}}|u_{t}|^{m(x)}+\delta^{m_1}|u|^{m(x)}\Big)dx\Big]dt
\\
&\quad\leq-\delta^{-\frac{m_1}{m_1-1}}\int_{s}^{T} \xi(t)E^\gamma(t)  E'(t)dt+\delta^{m_1}a\int_{s}^{T}  \xi(t)E^\gamma(t) \int_{\Omega}|u|^{m(x)}dxdt\\
&\quad\leq\frac{\delta^{-\frac{m_1}{m_1-1}}}{\gamma+1}\xi(0)E^{\gamma}(0)E(s)+\delta^{m_1}a\frac{2B_1^{m_2}}{l^{\frac{m_2}{2}}}(1+\tilde{C}) \int_{s}^{T}\xi(t)E^{\gamma+1}(t)dt,
\end{split}
\end{equation}
here, we apply
\begin{equation*}
\begin{split}
\int_{\Omega}|u|^{m(x)}dx&\leq \max\Big\{\frac{B_1^{m_1}}{l^{\frac{m_1}{2}}}\Big(l^{\frac12}\|\nabla u(t)\|_2\Big)^{m_1},\frac{B_1^{m_2}}{l^{\frac{m_2}{2}}}\Big(l^{\frac12}\|\nabla u(t)\|_2\Big)^{m_2}\Big\}\\
&\quad\leq \frac{B_1^{m_2}}{l^{\frac{m_2}{2}}}\Big(l^{\frac12}\|\nabla u(t)\|_2\Big)^{2}\leq \frac{2B_1^{m_2}}{l^{\frac{m_2}{2}}}(1+\tilde{C})E(t)
\end{split}
\end{equation*}
by Lemma \ref{lem3.1} and \eqref{addequ1}.

\textbf{(8)} Estimate for $b\int_s^T\xi(t)E^\gamma(t)\int_\Omega\Big(1-\frac{2}{p(x)}\Big)|u(t)|^{p(x)}dxdt$\\
It directly follows from \eqref{3.3} that
\begin{equation}\label{3.17}
\begin{split}
  b\int_s^T\xi(t)E^\gamma(t)\int_\Omega\Big(1-\frac{2}{p(x)}\Big)|u(t)|^{p(x)}dxdt\leq \Big(1-\frac{2}{p_2}\Big)\tilde{C}p_2\int_s^T\xi(t)E^{\gamma+1}(t)dt.
\end{split}
\end{equation}

We are now in a position to finish the estimate for \eqref{3.9}. Let us show the process by setting \textsc{Case} 1($m_1>2$) as an example.

\textsc{Type} \uppercase\expandafter{\romannumeral1}: $g'(t)+\le-\xi(t)g(t)$.\\
Combining \eqref{3.9} with \eqref{3.10}-\eqref{3.12} as well as \eqref{3.14},\eqref{3.15},\eqref{3.16}, \eqref{3.17}, we obtain
\begin{equation}\label{3.18}
\begin{split}
2\int_s^T E^{\gamma+1}(t)dt
&\leq\Bigg\{2(1+|\Omega|)^2\epsilon^{\frac{m_2}{m_2-2}}\max\Big\{E^{\frac{m_2-m_1}{m_1-2}}(0),1\Big\}\\
&\quad\quad+\varepsilon(1+\tilde{C})\frac{1-l}{l}+\delta^{m_1}a\frac{2B_1^{m_2}}{l^{\frac{m_2}{2}}}(1+\tilde{C}) \Bigg\}\int_{s}^{T}E^{\gamma+1}(t)dt\\
&\quad\quad+\Bigg\{\Big[\frac{3(1+\tilde{C})}{\omega_1l}+3(1+\tilde{C})\Big]+
\Big[\frac{\gamma(1+\tilde{C})}{\omega_1l(\gamma+1)}+\frac{\gamma(1+\tilde{C})}{\gamma+1}\Big]\\
&\quad\quad+ 2(1+|\Omega|)^2\epsilon^{-\frac{m_2}{2}}\frac{1}{aE^{\gamma}(0)}
+\Big(1+\frac{1}{2\varepsilon}\Big)\frac{2}{\xi(0)(\gamma+1)}\\
&\quad\quad+\frac{\delta^{-\frac{m_1}{m_1-1}}}{\gamma+1}\Bigg\}\xi(0)E^{\gamma}(0)E(s)
+\Big(1-\frac{2}{p_2}\Big)\tilde{C}p_2\int_s^T\xi(t)E^{\gamma+1}(t)dt.
\end{split}
\end{equation}
The condition \[0<E(0)=f(\lambda_2)<f\Big(\Big(\frac{p_1}{p_2}\Big)^{\frac{1}{p_1-2}}\lambda_1\Big)=\Big(\frac{p_1}{p_2}\Big)^{\frac{1}{p_1-2}}\lambda_1^2\Big(\frac12-\frac{1}{p_2}\Big)\]
 and the monotonicity of $f(\lambda)$ easily imply
\[\lambda_2<\Big(\frac{p_1}{p_2}\Big)^{\frac{1}{p_1-2}}\lambda_1\leq\lambda_1<1,\]
which further illustrates
\[\omega:=\Big(1-\frac{2}{p_2}\Big)\tilde{C}p_2<2.\]
Let us choose $0<\epsilon,~\varepsilon,~\delta<1$ sufficiently small such that
\[2(1+|\Omega|)^2\epsilon^{\frac{m_2}{m_2-2}}\max\Big\{E^{\frac{m_2-m_1}{m_1-2}}(0),1\Big\}+\varepsilon(1+\tilde{C})\frac{1-l}{l}+\delta^{m_1}a\frac{2B_1^{m_2}}{l^{\frac{m_2}{2}}}(1+\tilde{C}) =\frac{2-\omega}{2}.\]
Therefore, \eqref{3.18} can be rewritten as
\[\int_s^T \xi(t)E^{\gamma+1}(t)dt\leq\frac {1}{K}E^{\gamma}(0)E(s),\]
 where
 \begin{equation}\label{add3.8}
 \begin{split}
 K&=\Bigg\{\Big[\frac{3(1+\tilde{C})}{\omega_1l}+3(1+\tilde{C})\Big]+
\Big[\frac{\gamma(1+\tilde{C})}{\omega_1l(\gamma+1)}+\frac{\gamma(1+\tilde{C})}{\gamma+1}\Big]\\
&\quad\quad+ 2(1+|\Omega|)^2\epsilon^{-\frac{m_2}{2}}\frac{1}{aE^{\gamma}(0)}
+\Big(1+\frac{1}{2\varepsilon}\Big)\frac{2}{\xi(0)(\gamma+1)}+\frac{\delta^{-\frac{m_1}{m_1-1}}}{\gamma+1}\Bigg\}^{-1}\frac{1}{\xi(0)}\frac{2-\omega}{2}.
\end{split}
\end{equation}
Obviously,
\begin{equation}
\label{3.19}
\int_s^{+\infty} \xi(t)E^{\gamma+1}(t)dt\leq\frac {1}{K}E^{\gamma}(0)E(s)
\end{equation}
by letting $T\to +\infty$. Choosing $\phi(t)=\int_0^t\xi(s)ds$ in Lemma \ref{lem3.0}, we
directly obtain  \eqref{00add0}.

\textsc{Type} \uppercase\expandafter{\romannumeral2}: $g'(t)+Cg^\alpha(t)\leq 0$.\\
Combining \eqref{3.9} and \eqref{3.10}-\eqref{3.12} as well as \eqref{add3.7}, \eqref{addeq1}-\eqref{3.17},  and letting $\xi(t)=1$, similarly, we get
\begin{equation}\label{3.20}
\int_s^{+\infty} E^{\gamma+1}(t)dt\leq\frac {1}{K(\alpha,\sigma)}E^{\gamma}(0)E(s)
\end{equation}
with
\begin{equation}\label{3.21}
 \begin{split}
 K(\alpha,\sigma)&=\Bigg\{\Big[\frac{2(1+\tilde{C})}{\omega_1l}+2(1+\tilde{C})\Big]+
\Big[\frac{\gamma(1+\tilde{C})}{\omega_1l(\gamma+1)}+\frac{\gamma(1+\tilde{C})}{\gamma+1}\Big]\\
&\quad\quad+ 2(1+|\Omega|)^2\epsilon^{-\frac{m_2}{2}}\frac{1}{aE^{\gamma}(0)}+\frac{2}{C(\gamma+1)}\\
&\quad\quad+\Big(1+\frac{1}{2\varepsilon}\Big)\Big[\frac{\alpha-1}{\sigma+\alpha-1}\frac{4(1+\tilde{C})}{l}\frac{1}{E^{\gamma}(0)}
\frac{C^{1-\sigma}}{1-\frac{1-\sigma}{\alpha-1}}\frac{1}{2-\frac{1-\sigma}{\alpha-1}}\\
&\quad\quad+\frac{\sigma}{C(\sigma+\alpha-1)}\frac{1}{\frac{\gamma(\sigma+\alpha-1)}{\sigma}+1}
     E^{\frac{\gamma(\sigma+\alpha-1)}{\sigma}-\gamma}(0)\Big]+\frac{\delta^{-\frac{m_1}{m_1-1}}}{\gamma+1}\Bigg\}^{-1}\frac{2-\omega}{2}.
\end{split}
\end{equation}
Lemma \ref{lem3.0} directly illustrates \eqref{00add1}.

For \textsc{Case} 2($m_2>m_1=2$), similar to \textsc{Case} 1, we still prove \eqref{00add0} and \eqref{00add1}. In fact, we easily find the corresponding $K$ and $K(\alpha,\sigma)$ like $\eqref{3.19}$ and \eqref{3.21} with a slight difference, respectively.

For  \textsc{Case} 3($m_2=m_1=2$), i.e. $m(x)=2$, let us set $\gamma=0$, then similar to \textsc{Case} 1, we can prove  \eqref{add0} and \eqref{add1}.
\end{proof3.1}

\subsection*{Acknowledgements}
The first author wishes to express her sincere gratitude to Professor Wenjie Gao  for his support and constant encouragement.

\end{document}